\documentclass[reqno,12pt]{amsart} 
\usepackage{amsmath,amssymb,amsfonts}
\usepackage{eucal}
\usepackage{enumerate}
\usepackage{graphicx}
\usepackage{cite}

 \usepackage{setspace}
 \usepackage{mathrsfs}
 \usepackage{algorithm}
 \usepackage[noend]{algpseudocode}
 \usepackage{xcolor}
  \usepackage{url}

\setlength{\textwidth}{6.5in}
\setlength{\oddsidemargin}{0.0in}
\setlength{\evensidemargin}{0.0in}
\setlength{\textheight}{9in}
\setlength{\topmargin}{-.2in}
\pagestyle{plain}

\newcommand \comment[1]{}           
\renewcommand \comment[1]{\emph{[#1]}}      

\newtheorem{lemma}{Lemma}[section]
\newtheorem{corollary}[lemma]{Corollary}
\newtheorem{proposition}[lemma]{Proposition}
\newtheorem{theorem}[lemma]{Theorem}

\newtheorem{conjecture}[lemma]{Conjecture}

\theoremstyle{definition}

\newtheorem{example}{Example}[section]

\renewcommand{\phi}{\varphi}                 
\renewcommand{\epsilon}{\varepsilon}

\renewcommand\ell{l}

\newcommand\Tutte{\operatorname{Tutte}}
\newcommand\card{\operatorname{card}}

\newcommand\bbZ{\mathbb{Z}}

\newcommand\G{\Gamma}

\begin{document}

\pagestyle{myheadings} 
\markleft{\sc Rigoberto Fl\'orez and David Forge} 
\markright{\sc Activity from matroids to rooted trees and beyond } 
\thispagestyle{empty}

\title{Activity from matroids to rooted trees and beyond }

\author{Rigoberto Fl\'orez}
\address{The Citadel, Charleston, South Carolina 29409}
\email{\tt rigo.florez@citadel.edu}

\author{David Forge}
\email{\tt forge@lri.fr}

\date{\today}

\begin{abstract}
The interior and exterior activities of bases of a matroid are well-known notions that for instance permit one to define the Tutte polynomial. 
Recently, we have discovered correspondences between the regions of gainic hyperplane arrangements and colored 
labeled rooted trees.  Here we define a general activity theory that applies in particular to no-broken circuit (NBC) sets and labeled colored trees. 
The special case of activity \textsf{0} was our motivating case. As a consequence, in a gainic hyperplane arrangement the number  of bounded regions 
is equal to the number of the corresponding colored  labeled rooted trees of activity \textsf{0}.
\end{abstract}

 \keywords{hyperplane arrangement, no broken circuit, interior activity, Tutte polynomial, colored tree,
local binary search tree}

\subjclass[2010]{\emph{Primary} 05C22; \emph{Secondary} 05A19, 05C05, 05C30, 52C35.}
 
\maketitle

\section{Introduction}\label{intro}

 An \emph{integral affinographic hyperplane} is a hyperplane of the form  $x_j-x_i=g$ in real affine space, where $g$ is an integer. 
 Many authors have been interested in this type of  hyperplane arrangement (see, for example,  \cite{Athanasiadis, PS, ForgeZaslavs}).  
 Some familiar examples are  (taking complete graphs) the \emph{braid} arrangement,  where $g=0$;  
 the \emph{graphic arrangements}, subarrangements of the braid arrangement;  the \emph{Shi} arrangement,  
 where $g=0$ or $g=1$ with $1\le i< j\le n$; the Catalan arrangement,  where $g=-1$, $0$, or $1$; 
 the \emph{Linial arrangement}, where $g=1$ with $1\le i< j\le n$ (denoted by $\mathscr{L}_n$). 
 The general type is called \emph{gainic arrangements}; they correspond with gain graphs.
 
 The basic definitions and the background given here are based on what is given more generally in \cite{ForgeZaslavs,ZaslavskyLectureGainG}.  
Let $\Gamma$ be a graph and take the group $\mathbb{Z}:=(\mathbb{Z}, +)$, where $-a$ is the inverse of $a$. 
Orient the edges of  $\Gamma$, and let $\vec{E}$ be the set of oriented edges. Denote by $-e$ the edge $e$ with its opposite orientation. 
An \emph{integral gain graph} $\Phi$ is a pair $(\G,\phi)$, where  $\phi: \vec{E} \to \bbZ$ satisfies  $\phi(-e)= -\phi(e)$ for all oriented edges $e$.  
The function $\phi$ is called the \emph{gain mapping} and $\phi (e)$ is the \emph{gain} of $e$.  
The gain of a walk $e_1e_2 \cdots e_l\;\text{ is }$
$$\phi(e_1e_2\cdots  e_l)=\phi(e_1)+\phi(e_2)+\cdots+\phi(e_l).$$ 
A circle $C$ is a $2$-regular connected graph, and it is \emph{balanced} if $\phi(C)=0$. For the sake of simplicity 
in this paper we use $V:=[n]=\{1,2, \dots, n\}$ as the set of vertices of 
$\Phi$ and $g(i,j)$ to represent an edge of $\Phi$ with orientation from $i$ to $j$. 
 We associate  $g(i,j)$ to the hyperplane $x_j- x_i = g$. 
 
Let $K_n$ be the complete graph and let $a\le b$ be integers. We denote by  $K_n^{[a,b]}$ the gain graph with vertex set $V=[n]$  and edges $k(i, j)$ with 
$a\le k\le b$ and $1\le i< j\le n$. Some familiar examples are the \emph{braid gain graph} $B_n:=K_n^{[0,0]}$, the  
\emph{Linial gain graph} $L_n:=K_n^{[1,1]}$, the \emph{Shi gain graph} $S_n:= K_n^{[0,1]}$, and the  \emph{Catalan gain graph} $C_n:= K_n^{[-1,1]}$. 

The gain graphs and affinographic hyperplane arrangements can be easily associated by matching the hyperplane with equation $x_j-x_i=g$ and the 
edge $(i,j)$ with gain $g$. Therefore, we will accept to say the hyperplane arrangement $K^{[a,b]}_n$ when it should be the hyperplane arrangement  
corresponding to the gain graph $K^{[a,b]}_n$.

Given a linear order $<_O$ on the set of edges $E$, a \emph{broken circuit} is the set of edges obtained by deleting the smallest element in a 
balanced circle. A set of edges, $N\subseteq E$, is a \emph{no-broken-circuit set},  denoted NBC, if it does not contain a broken circuit. This concept 
is from matroid theory (see for example \cite{Bjorner}).  It is well-known that this set depends on the choice of the order. However,
the cardinality of  the set of NBC sets of the gain graph does not depend on choice of an order.

 There are still many questions related to these families of hyperplane arrangements.  For instance, is it possible to find their  
 characteristic polynomials  and their number of regions? This may help us to answer other questions.  In fact, when the characteristic 
 polynomial is evaluated at $x=-1$ and $x=1$, it gives the number of regions and bounded regions (see, for example, Zaslavsky \cite{ZaslavskyFU}). 
 Once the number of regions is found, another good question could be, is it possible to find a bijective proof, and what is the 
 target set of a bijection? Of course these are not easy questions. For example, it is known that the regions of the Shi  arrangement are in correspondence 
 with parking functions (a target set) and the regions of the Linial arrangement are in correspondence with local binary search trees (a target set).  
 
 Let $b(\mathscr{L}_n)$ be the number of bounded regions of the Linial arrangement   $\mathscr{L}_n$: $x_i-x_j=1$ for $1\le i< j\le n$. 
 Athanasiadis \cite{Athanasiadis} found a closed  formula for $b(\mathscr{L}_n)$. He proved (based on generating functions) that 
\begin{equation}\label{AthanasiadisForm}
b(\mathscr{L}_n)=\frac{1}{2^n}\sum_{j=0}^{n} {n \choose j}(j-1)^{n-1} 
\end{equation}
and stated that ``it would be interesting to find a combinatorial interpretation for the numbers $b(\mathscr{L}_n)$". Here in 
Theorem \ref{activity1} we give a combinatorial interpretation for this formula. (Independently, Tewari \cite{Tewari} 
gave a different interpretation.)

The Tutte polynomial  is an important combinatorial invariant of a matroid that is more general than the characteristic polynomial of a 
hyperplane arrangement. There is an expansion of the Tutte polynomial using the bases and their interior and exterior activities. 
The no-broken-circuit (NBC) sets  of the arrangement have exterior activity  \textsf{0}. Thus, only  
interior activity applies to the NBC sets. The restriction of the Tutte polynomial to the  NBC bases gives an expansion of the activity 
polynomial (to be defined shortly) very close to that of the characteristic polynomial.

Some bijections from the NBC sets of gainic arrangements to different families of colored labeled rooted trees have been given by 
Forge et al.\ in  \cite{CFM,CFV, Forge}.  Bernardi \cite{Bernardi} found a  bijection between regions of the same hyperplane arrangements 
using the  trees as defined in \cite{CFM}. 
Levear  \cite{Levear} uses the same trees to find bijections for the $k$-dimensional faces of the same arrangements. 
 In this paper, we define an activity on those families of target trees that corresponds to the 
activity of the NBC bases. This will give a relation between the NBC bases of activity \textsf{0} and some special subsets of our target 
set. Since the NBC bases with activity \textsf{0} correspond to the bounded regions of the arrangement, this gives a  proof that the number of bounded  
regions is equal to the number of trees of activity \textsf{0}.

In section \ref{Sect2} we  give some background results which brought us to what will follow in the next sections. Specially Theorem \ref{partition}   
and Theorem \ref{TutteActivity} are the most important ones. 

In section \ref{ActivityPureCoveringSystem}, we define covering systems a generalization of matroids where both Theorem \ref{partition} and Theorem \ref{TutteActivity} are preserved. 
 A covering system $\mathcal{S}$ is a generalization of the concept of a matroid. We generalize activity of NBC bases  to an activity on a covering system $\mathcal{S}$. 
This activity gives rise to a polynomial, the activity polynomial. We show both some applications of the activity polynomial and its relations with  the 
Tutte polynomial. 
 
In Section \ref{Sect4}, we apply the results from Section \ref{ActivityPureCoveringSystem} to the set of rooted colored forests that was our motivation.  
Theorem \ref{activity1} gives that the number of  bounded regions of an affinographic arrangement is equal to the number of the corresponding  
colored forest with activity \textsf{0}.
 
 Finally we give some examples and constructive proofs for the number of NBC sets of a given activity for the braid arrangement, Shi arrangement, 
 and Lineal arrangement. The results in this paper give rise to a conjecture that we state at the end of the paper.  
 
\section{Some background and motivating results} \label{Sect2}

As a motivation for the study of NBC bases  of gain graphs, in this section we present two theorems that show relationships between the 
NBC bases of gain graphs and other areas of combinatorics. 

\subsection{Rooted colored trees} We color the edges of trees with the numbers $[1,k]$.  Let $T$ be a rooted $k$-colored tree.
The edges of $T$ are represented by $(u,w)$,  or by $uw$ (for simplicity)  if there is no ambiguity, where  $w$ is a child of $u$. 
For a fixed internal vertex  $v$  of $T$, we set  $c_v$ to mean the minimum color used by the edges going out of $v$.  
We say that $T$ is \emph{decreasing}  (\emph{increasing}) if for any internal vertex $v \in T$, the label of $v$ is  
larger (smaller) than the labels of each of its children $w_i$ such that $vw_i$ is colored with $c_v$. 
Note that the edges $vw_i$ colored with a bigger that $c_v$ are not considered. We say that $T$ is  
\emph{non-increasing} (\emph{non-decreasing})  if for any internal vertex $v \in T$, the label of $v$ is larger (smaller) than the label  
of at least of one of its children $w_i$ such that  $vw_i$ is colored with  $c_v$.   
Similarly, for $k_1+k_2=k$, we say that $T$ is $(k_1,k_2)$-colored decreasing if for any internal vertex $v$ such that  $c_v\le k_1$, the label of 
$v$ is larger than the label of each of its child $w_i$ and such that the edge $vw_i$ is colored with $c_v$. 
Note that in this definition the edges colored with the $k_2$ colors in $[k_1+1,k]$ never are  checked. Therefore, these $k_2$  
colors are  called free colors and the $k_1$ first colors are the non-free colors.   
The definitions of  $(k_1,k_2)$-colored increasing, $(k_1,k_2)$-colored non-increasing, and   
$(k_1,k_2)$-colored non-decreasing are similar and we omit them. 

\begin{example}[\bf decreasing tree using $4$ colors]
Figure \ref{figure1} Part (a) depicts a decreasing tree colored with $4$ colors.  In this figure we use $v_i$ to mean the vertex with label $i$. 
For example,  the vertex $v_{10}$, with label $10$, is the parent of $v_3$, $v_5$, and $v_7$. The minimum color used by 
$v_{10}v_{3}$, $v_{10}v_{5}$ and  $v_{10}v_{7}$ is $c_{v_{10}}=1$. We need to check decreasing condition only for the edge 
$v_{10}v_{3}$. From the figure can see that $v_{3}<v_{10}$, so we check it as OK.  
Similarly,  from the figure we can see that the edges $v_{3}v_{1}$ and $v_{3}v_{2}$ are colored with $1$ and the edges 
$v_{3}v_{4}$ and $v_{3}v_{11}$ are colored with $2$ and $3$, respectively. Therefore, $c_{v_{3}}=1$. Thus, we only need to check that in fact 
both satisfy that  $v_{1}<v_{3}$ and $v_{2}<v_{3}$. Finally, we observe that $c_{v_{7}}=2$. Since  $v_{6}<v_{7}$, we check it as OK. 
\end{example}

\begin{example}[\bf  $(2,2)$-colored decreasing tree] Figure \ref{figure1} Part (b) depicts a $(2,2)$ colored decreasing tree. In this example we have two non-free colors $1$  and  $2$, and two free colors  $3$ and $4$. The vertex $v_{6}$   
is the parent of $v_3$, $v_5$, and $v_{11}$. The colors used by the edges $v_6v_3$, $v_6v_5$, and $v_6v_{11}$ give that the minimum color is $c_{v_6}=\min\{1,2,3\}=1$. To check that the tree is 
$(2,2)$ colored decreasing at $v_6$, we need to check only the vertices  of the edges using the color $1$. So, in this case $v_{3}<v_{6}$. Therefore we check it as OK. 
Similarly, since $c_{v_3}=1$, we need to check that in fact these hold $v_{1}<v_{3}$ and $v_{2}<v_{3}$. Note that $c_{v_{5}}=3$, so there is nothing to check 
since $3$ is a free color. Finally, we observe that $c_{v_{11}}=2$, so we check that in fact the condition $v_7<v_{11}$ holds.   
\end{example}

\begin{figure} [htbp]
\begin{center} 
\includegraphics[scale=1.0]{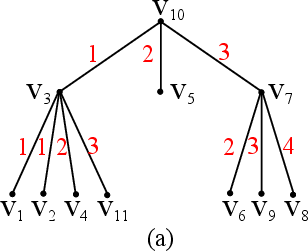} \hspace{2cm}
\includegraphics[scale=1.0]{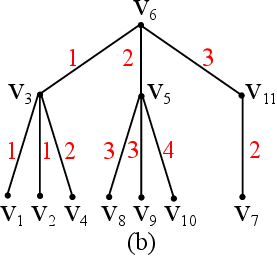}
\end{center}
\caption{ (a) Decreasing tree using $4$ colors \quad (b) $(2,2)$-colored decreasing tree. }
 \label{figure1}
\end{figure}

\subsection{Activity of NBC sets}\label{ActivityNBCSets}

Given a matroid $M$ on an ordered set $E$, an element $e$ is exteriorly (respectively, interiorly) active relative to a base $B$ if $e$ 
is the smallest  element of the  fundamental circuit $C(B,e)$ (respectively, cocircuit $C^*(B,e)$)  \cite{LasVergnas}. 
We use  $\iota (B)$ and $\epsilon (B)$ to denote the sets of interiorly and exteriorly active elements of a base $B$, and use $|\iota (B)|$ and 
$|\epsilon (B)|$ to denote the cardinality of those sets (these two cardinalities are called the \emph{interior} and \emph{exterior activity numbers}). These  
definitions were  introduced by Tutte for graphs and extended by Crapo \cite{Crapo} to matroids. The activity numbers give a formula for the Tutte polynomial:
\begin{equation}\label{tuttePoly}
\Tutte_A(x,y)=\sum_{B \textrm{ a basis}} x^{|\iota (B)|}y^{|\epsilon (B)|}.
\end{equation}

The \emph{NBC bases} are exactly those of exterior activity \textsf{0}. 
The activity of an NBC basis means only its interior activity.
Any subset of an NBC basis is called \emph{NBC set}. The following classic    
partition theorem is at the origin of our definition of a generalized activity. For simplicity, in this paper, when we refer to an activity number  
we mean the interior activity number. 

\begin{theorem}[\cite{Bjorner}]\label{partition}
Let $M$ be a matroid on a linearly ordered set $E$  and  let $\mathcal I$ be the set of all independent sets. Then: 

\begin{enumerate}
\item Every independent  set $I $ of $M$ can be uniquely written in the form $I = B \setminus Y$ for some  basis $B$ and some subset  
$Y \subset \iota(B)$. Equivalently, the intervals $[B \setminus \iota(B), B]$, form a partition of the set $\mathcal I$.

\item Every NBC set $A $ of $M$ can be uniquely written in the form $A = B \setminus Y$ for some NBC basis $B$ and some subset  
$Y \subset \iota(B)$. Equivalently, the intervals $[B \setminus \iota(B), B]$ form a partition of the set $\mathcal N$ of NBC sets.
\end{enumerate}

\end{theorem}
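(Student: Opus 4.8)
The plan is to prove part (1) by attaching to each independent set a canonical basis, and then to deduce part (2) by showing that being NBC is constant along each interval $[B\setminus\iota(B),B]$. For part (1), I would first define, for an independent set $I$, its greedy extension $B(I)$: list $E$ in increasing order, start from $I$, and adjoin each successive element whenever the result stays independent; this produces a basis containing $I$. Existence---that $I\in[B(I)\setminus\iota(B(I)),B(I)]$---is equivalent to $B(I)\setminus I\subseteq\iota(B(I))$. To see this, take $e\in B(I)\setminus I$ and suppose $e$ is not interiorly active, so some $f<e$ with $f\notin B(I)$ makes $B(I)-e+f$ a basis. The greedy rule rejected $f$, so $f$ lies in the closure of the previously chosen elements, all of which sit in $B(I)-e$; thus $f\in\operatorname{cl}(B(I)-e)$, contradicting the independence of $B(I)-e+f$. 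Hence every adjoined element is interiorly active, and existence follows.

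For uniqueness I would show that any basis $B$ with $I\subseteq B$ and $B\setminus I\subseteq\iota(B)$ coincides with $B(I)$. Suppose not, and let $e$ be the least element of $B\triangle B(I)$; since $I\subseteq B\cap B(I)$ we have $e\notin I$. If $e\in B(I)\setminus B$, then greedy adjoined $e$, so $e$ lies outside the closure of $I$ together with the already-chosen smaller elements---a set that, by minimality of $e$, also sits inside $B$; this forces the fundamental circuit $C(B,e)$ to contain some $g\in B\setminus I$ with $g>e$ and $B-g+e$ a basis, whence $e\in C^*(B,g)$ with $e<g$, contradicting $g\in\iota(B)$. If instead $e\in B\setminus B(I)$, then greedy rejected $e$, so $e\in\operatorname{cl}(B-e)$, impossible for the basis $B$. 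Thus $B=B(I)$, the intervals are pairwise disjoint, and together with existence they partition $\mathcal I$.

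For part (2), the crux is that for any basis $B$ and any $I\in[B\setminus\iota(B),B]$, the set $I$ is NBC if and only if $B$ is. One direction is immediate: a broken circuit contained in $I\subseteq B$ would also lie in $B$, so subsets of NBC sets are NBC. For the converse, assume $I$ is NBC and suppose for contradiction that $B$ contains a broken circuit $C\setminus\{c_0\}$ with $c_0=\min C$; then $c_0\notin B$ while $C\setminus\{c_0\}\subseteq B$. If $C\setminus\{c_0\}\subseteq I$ then $I$ would contain a broken circuit, so some $e\in(C\setminus\{c_0\})\cap(B\setminus I)\subseteq\iota(B)$. Orthogonality of the circuit $C$ and the fundamental cocircuit $C^*(B,e)$ (they cannot meet in a single element) forces $c_0\in C^*(B,e)$, and since $e$ is the least element of $C^*(B,e)$ this gives $e<c_0$, contradicting $c_0=\min C$. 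Hence $B$ is NBC. Each interval therefore lies entirely inside $\cN$ or entirely outside it, so restricting the partition from part (1) to NBC sets gives exactly the partition of $\cN$ by the intervals of the NBC bases.

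I expect the two exchange arguments---the uniqueness step in part (1) and the orthogonality step closing part (2)---to be the main obstacles, since both hinge on converting the minimality condition that defines interior activity into a contradiction via the exchange and orthogonality properties of circuits and cocircuits. In particular, the delicate point in part (2) is ensuring that a broken circuit present in $B$ can always be traced to an interiorly active element $e$ whose fundamental cocircuit then forces the impossible inequality $e<c_0=\min C$.
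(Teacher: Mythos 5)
The paper itself contains no proof of Theorem \ref{partition}: it is quoted as a classical partition result and attributed to \cite{CFM} (listed as in preparation), so there is no in-paper argument to compare yours against; I therefore checked your proof on its own terms, and it is correct, following the classical Crapo--Las Vergnas route. In part (1) your greedy extension $B(I)$ is exactly the unique basis $B \supseteq I$ with $B \setminus I \subseteq \iota(B)$: the existence step is sound (a rejected $f<e$ satisfies $f \in \operatorname{cl}(B(I)-e)$ because everything chosen before $f$ lies in $B(I)-e$, so $B(I)-e+f$ cannot be a basis), and so is the uniqueness step, where the minimal $e \in B \mathbin{\triangle} B(I)$ guarantees $B \cap \{x : x<e\} = B(I) \cap \{x : x<e\}$, producing the pivot $g \in C(B,e)$ with $g>e$, $g \in B\setminus I \subseteq \iota(B)$, and then $e \in C^*(B,g)$ with $e<g$ contradicts $g = \min C^*(B,g)$; both steps use only the paper's definition of interior activity. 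In part (2) the orthogonality argument is also correct: $e \in C \cap C^*(B,e)$, the intersection cannot be a singleton, $C^*(B,e)\setminus\{e\}$ is disjoint from $B$ while $C \setminus B = \{c_0\}$, so $c_0 \in C^*(B,e)$ and $e < c_0 < e$, showing NBC-ness is constant on each interval, whence the partition of $\mathcal I$ restricts to $\mathcal N$. Two minor remarks: you silently use that every NBC set is independent (needed to apply part (1) to an NBC set $A$); this is the standard one-line fact that a dependent set contains a circuit and hence the broken circuit obtained from it, and it deserves a sentence. Also, the statement's phrase ``partition of the set $\mathcal N$ of NBC basis'' is a typo for NBC \emph{sets}, which is how you (rightly) read it.
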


All these definitions are also valid for a semimatroid. An (affine) hyperplane arrangement defines the semilattice of flats of a semimatroid by 
considering all the non-empty  intersection sets of hyperplanes.  So, for simplicity, instead to say basis of the semimatroid defined by an arrangement 
we say basis of the arrangement. Similarly, whenever we need to say the basis of the semimatroid defined by NBC sets we say NBC basis and so on. 
For details on semimatroids see Ardila \cite{Ardila}. The following theorem is a classic result, see for example Crapo \cite{Crapo}, Tutte, and Zaslavsky's   
Theorem in the vocabulary of regions. 

\begin{theorem} \label{TutteActivity}  
The Tutte polynomial of an arrangement $A$ restricted to $y=0$ gives the activity polynomial, i.e., 
$\alpha(x)=\Tutte_A(x,0)=\sum_{i=0}^na_ix^i$. Furthermore, these hold:
\begin{enumerate}
\item \label{TutteActivityPart1} the activity polynomial is related to the  characteristic polynomial $\chi$ by 
$$\alpha(x)=(-1)^{n}\chi(1-x).$$

\item \label{TutteActivityPart2} The coefficient $a_i$ is equal to the number of NBC bases of activity $i$ of the arrangement.
\item \label{TutteActivityPart3} The value $\alpha(0)$ gives the number of bounded regions of the arrangements and the number of 
NBC bases of activity \textsf{0} of the arrangement.

\item \label{TutteActivityPart4} The value $\alpha(1)$ gives the number of NBC bases of the arrangement.
\item \label{TutteActivityPart5} The value $\alpha(2)$ gives the number of  regions and the number of NBC sets of the arrangement.

\end{enumerate}
\end{theorem}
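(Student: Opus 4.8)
The plan is to read every assertion off the activity expansion \eqref{tuttePoly} of the Tutte polynomial and then to translate three classical facts into the semimatroid language of the arrangement: the Tutte--characteristic identity, Zaslavsky's region counts, and the partition theorem. First I would set $y=0$ in \eqref{tuttePoly}. A basis $B$ contributes the monomial $x^{|\iota(B)|}y^{|\epsilon(B)|}$, and only the terms with $|\epsilon(B)|=0$ survive the substitution; since the bases of exterior activity \textsf{0} are exactly the NBC bases, this yields
\[
\alpha(x)=\Tutte_A(x,0)=\sum_{B\text{ an NBC basis}}x^{|\iota(B)|}.
\]
Reading off the coefficient of $x^i$ proves part \ref{TutteActivityPart2}, namely that $a_i$ is the number of NBC bases of interior activity $i$; in particular $a_0=\alpha(0)$ is already the number of NBC bases of activity \textsf{0}, which is the enumerative half of part \ref{TutteActivityPart3}.

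For part \ref{TutteActivityPart1} I would invoke the classical identity $\chi(t)=(-1)^{r}\,\Tutte_A(1-t,0)$ relating the characteristic and Tutte polynomials of a matroid of rank $r$. The affinographic arrangement attached to a gain graph on $[n]$ has rank $r=n-1$: a maximal independent set corresponds to a spanning forest of the underlying graph, which has $n-1$ edges when that graph is connected, and equivalently the normal vectors $e_j-e_i$ span the hyperplane $\sum x=0$ of dimension $n-1$. Substituting $t=1-x$ and $r=n-1$ then gives $\chi(1-x)=(-1)^{n-1}\alpha(x)$, i.e. $\alpha(x)=(-1)^{n-1}\chi(1-x)$.

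The remaining parts follow by specializing $x=0,1,2$ and combining two ingredients. The enumerative values come from the displayed expansion together with Theorem \ref{partition}(2): at $x=1$ we get $\alpha(1)=\sum_B 1$, the number of NBC bases, proving part \ref{TutteActivityPart4}; at $x=2$ we get $\alpha(2)=\sum_B 2^{|\iota(B)|}$, where each summand counts the subsets $Y\subseteq\iota(B)$, and since by Theorem \ref{partition}(2) the intervals $[B\setminus\iota(B),B]$ partition all NBC sets, $\alpha(2)$ equals the total number of NBC sets. The geometric values come from Zaslavsky's theorem read through part \ref{TutteActivityPart1}: for an arrangement of rank $n-1$ the number of bounded regions is $(-1)^{n-1}\chi(1)=\alpha(0)$ and the number of regions is $(-1)^{n-1}\chi(-1)=\alpha(2)$. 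Hence $\alpha(0)$ counts the bounded regions (completing part \ref{TutteActivityPart3}) and $\alpha(2)$ counts both the regions and the NBC sets (part \ref{TutteActivityPart5}).

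The main obstacle is not any single computation but the affine setting: the ambient object is a semimatroid rather than a matroid, whereas the Tutte--characteristic identity, Zaslavsky's formulas, and the partition theorem are classically phrased for matroids or central arrangements. The real work is therefore to confirm that each of these survives in the semimatroid framework of Ardila with the correct rank $n-1$ and the correct signs. Once that bookkeeping is settled, every claim in the theorem reduces to a one-line specialization of the expansion for $\alpha(x)$.
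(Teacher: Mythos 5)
Your proposal is correct and follows essentially the same route as the paper's (much terser) proof: read parts \ref{TutteActivityPart2}, \ref{TutteActivityPart4}, and the enumerative halves of \ref{TutteActivityPart3} and \ref{TutteActivityPart5} off the basis-activity expansion \eqref{tuttePoly} together with Theorem \ref{partition}(2), get part \ref{TutteActivityPart1} from the Tutte--characteristic identity in Ardila's semimatroid setting, and get the region counts from Zaslavsky's theorem. If anything, your version is more careful than the paper's: you correctly pin the rank at $r=n-1$ (so the sign is $(-1)^{n-1}$, matching the theorem statement, whereas the paper's proof writes $\chi(x)=(-1)^n\Tutte(1-x,0)$), and you make explicit that $\alpha(2)=\sum_B 2^{|\iota(B)|}$ counts all NBC \emph{sets}, which is the intended reading of part \ref{TutteActivityPart5}.
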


\section{Activity of a covering system}\label{ActivityPureCoveringSystem}

We define an activity which is a generalization of the NBC activity using the partition property of Theorem \ref{partition}. 
The power set of a finite set  $E$ is denoted by $2^E$. An $r$-set is a set with $r$ elements, where the empty set is the $0$-set. 
Let $X\subseteq Y$ be finite sets;  we use $[X,Y]$ to denote the set $\{Z \mid X\subseteq Z\subseteq Y\}$,  called an \emph{interval}.   
An \emph{$r$-covering system}  $\mathcal{S}$ is an   ordered pair $(E, \mathcal{I})$ consisting of a finite set  $E$ and a collection 
$\mathcal{I}$ of subsets of $E$ of cardinality less than or equal to $r$, with the condition that  for every $I \in \mathcal{I}$ there is an 
$r$-set  $B \in \mathcal{I}$ such that $[I,B]\subseteq \mathcal I$.  An $r$-set  in $\mathcal I$  is called a \emph{basis} and the  set of all  
bases is  denoted by  $\mathcal B$. The elements of  $\mathcal{I}$ are called \emph{independent sets}. Note that we borrowed this terminology from matroids.  

For an $r$-covering system $\mathcal{S}$ the function $\emph{\texttt{a}}: \mathcal B \to 2^E$  is an \emph{activity}, if for every $B \in \mathcal{B}$  
it holds that  $\emph{\texttt{a}}(B)\subseteq B$ with  $[B\setminus \emph{\texttt{a}}(B),B]\subseteq \mathcal I$ and for every   
$I\in \mathcal I$   there is a unique $B \in \mathcal B$ such  that $B\setminus \emph{\texttt{a}}(B)\subseteq I\subseteq B$.  When these 
conditions are expressed by the  intervals of the form   $[B\setminus \emph{\texttt{a}}(B),B]$, they give rise to a partition of $\mathcal I$. 
The set $\emph{\texttt{a}}(B)$ and its  cardinality $|\emph{\texttt{a}}(B)|$  are called the set of \emph{active elements} of $B$ and the  
\emph{activity number} of $B$, respectively.  

The vector $A=(a_0,\ldots,a_r)$ is called  the   \emph{activity vector} of $\emph{\texttt{a}}$, where $a_i$ represents the number of sets 
of $\mathcal B$ of activity number $i$ (clearly,  $|\mathcal B|=\sum_{k=0}^{r} a_k$).  
The \emph{activity polynomial} is $\alpha(x)=\sum_{i=0} ^{r}a_i x^{i}$,  where $a_i$  
is an element of the activity vector. 

We say that  
$C=(c_0,\ldots,c_r)$ is the \emph{cardinality vector} of $\mathcal{S}$ 
if  $c_i$ is the number of sets in $\mathcal I$ of cardinality $i$. In particular, $c_r=|\mathcal B|$ and $c_0$ is equal to 1  if 
$\emptyset\in \mathcal I$  and $0$  otherwise. 

For example, the ordered pair  $\mathcal{S}:= (E,\mathcal I)$ is a covering system with three bases, where $E=\{1,2,3,4,5\}$ and 
\begin{multline*} \mathcal{I}=\{\{1,2,3,4\},\{1,2,4\},\{2,3,4\},\{2,4\},\{1,2,3,5\},  \{1,2,3\}, \{2,3,5\},\{2,3\},\{1,3,4,5\},\\
\{1,3,4\},\{1,3,5\},\{1,4,5\},\{1,3\},\{1,4\},\{1,5\},\{1\},\{2,3,4,5\}\}.
 \end{multline*}
An activity is given by 
\begin{eqnarray*}
\emph{\texttt{a}}(\{1,2,3,4\})=\{\textsf{1},\textsf{3}\},& \quad &\emph{\texttt{a}}(\{1, 2, 3, 5\})=\{\textsf{1},\textsf{5}\},\\
\emph{\texttt{a}}(\{1,3,4,5\})=\{\textsf{3},\textsf{4},\textsf{5}\},& \quad \text{ and } \quad &\emph{\texttt{a}}(\{2,3,4,5\})=\emptyset.
\end{eqnarray*}
In this case we have $A=(1,0,2,1,0)$,  $\alpha(x)=1+2x^2+x^3$ and $C=(0,1,5,7,4)$.
We show the intervals in the poset determined by the antichains of NBC bases in Figure \ref{Lattice2}. 

\begin{figure} [htbp]
\begin{center} 
\includegraphics[scale=.8]{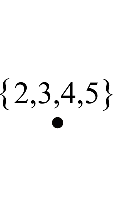} \hspace{1.5cm} \includegraphics[scale=.8]{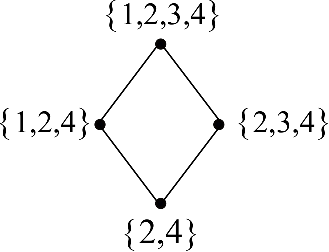}  \\ 
 \includegraphics[scale=.8]{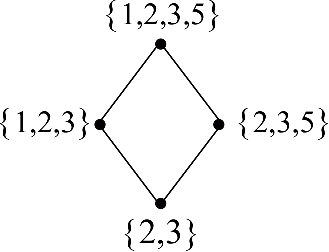} \hspace{2.5cm}  \includegraphics[scale=.8]{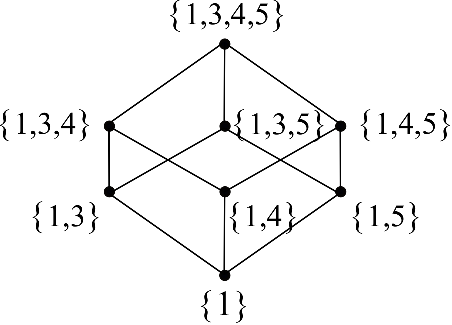}
\end{center}
\caption{Antichains of NBC bases.} \label{Lattice2}
\end{figure}

The following result is a natural generalization from matroid theory. So, its proof is straightforward.

\begin{proposition} \label{BasicOn Activities}
If $(E,\mathcal I)$ is a covering system with activity polynomial $\alpha$, then 
\begin {enumerate}
\item  \label{BasicOn ActivitiesPart1} the number of bases of activity \textsf{0} is given by  $\alpha(0)$,
\item  \label{BasicOn ActivitiesPart2} the number of bases is given by  $\alpha(1)$, and 
\item  \label{BasicOn ActivitiesPart3} the number of sets in $\mathcal I$ is given by $\alpha(2)$.
\end{enumerate}
\end{proposition}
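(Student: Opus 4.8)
The plan is to evaluate the activity polynomial $\alpha(x)=\sum_{i=0}^{r}a_ix^i$ at the three points $x=0$, $x=1$, and $x=2$, and to interpret each value using the definition of the coefficients $a_i$ together with the partition property that is built into the notion of an activity.

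The first two assertions are essentially immediate. Evaluating at $x=0$, and adopting the convention $0^0=1$, annihilates every term except the constant one, so that $\alpha(0)=a_0$; since $a_i$ is by definition the number of bases of activity number $i$, the quantity $a_0$ is exactly the number of bases of activity \textsf{0}. Evaluating at $x=1$ gives $\alpha(1)=\sum_{i=0}^{r}a_i$, which by the observation $|\mathcal B|=\sum_{k=0}^{r}a_k$ recorded just after the definition of the activity vector equals the total number of bases.

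The substantive step is the third assertion. Here the idea is to compute $|\mathcal I|$ by summing the cardinalities of the intervals $[B\setminus\texttt{a}(B),B]$. For a fixed basis $B$, a set $Z$ belongs to this interval exactly when $Z=B\setminus Y$ for some $Y\subseteq\texttt{a}(B)$, and distinct subsets $Y$ yield distinct sets $Z$; hence the interval contains precisely $2^{|\texttt{a}(B)|}$ elements. By the defining axiom of an activity, as $B$ ranges over $\mathcal B$ these intervals form a partition of $\mathcal I$, so $|\mathcal I|=\sum_{B\in\mathcal B}2^{|\texttt{a}(B)|}$. Grouping the bases according to their activity number converts this sum into $\sum_{i=0}^{r}a_i2^i=\alpha(2)$, which is the desired count.

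The computation throughout is elementary, and the only point that calls for any care --- and hence the closest thing to an obstacle, mild though it is --- is the justification in the third assertion that each interval has exactly $2^{|\texttt{a}(B)|}$ elements and that the intervals are pairwise disjoint and exhaust $\mathcal I$. Both facts follow directly from the uniqueness clause in the definition of an activity, namely that for every $I\in\mathcal I$ there is a unique $B\in\mathcal B$ with $B\setminus\texttt{a}(B)\subseteq I\subseteq B$, so that no further structural information about the covering system is needed.
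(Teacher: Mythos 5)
Your proof is correct and follows essentially the same route as the paper: the first two parts by direct evaluation of $\alpha$ at $0$ and $1$, and the third by observing that a basis of activity $i$ covers $2^i$ sets and invoking the partition of $\mathcal I$ into the intervals $[B\setminus\texttt{a}(B),B]$. You merely spell out in more detail the counting and the disjointness/exhaustiveness, which the paper leaves implicit.
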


\begin{proof} The proofs of Part  \eqref{BasicOn ActivitiesPart1} and Part \eqref{BasicOn ActivitiesPart2} are straightforward    
from the definition of $a_i$ (the number of bases of activity $i$). 

Proof of Part  \eqref{BasicOn ActivitiesPart3}. Since a base of activity $i$ covers exactly $2^i$ sets of $\mathcal I$, the conclusion  
follows from the partition property of  the activity.
\end{proof}

\begin{theorem}\label{Theorem:Act:Cardi:Vec}
Let $\mathcal{S}:= (E,\mathcal I)$ be an $r$-covering system with $C=(c_0,\ldots,c_r)$ its cardinality vector. 
Let $\textit{\texttt{a}}$ be an activity for $\mathcal{S}$ and let   
$A=(a_0,\ldots,a_r)$ be the corresponding activity vector. 
Then $A$ can be obtained  from $C$ by setting $a_{r}=c_{0}$ and  $a_{r-i}=c_{i} -\sum_{j=r-i+1}^{r} a_j {j\choose r-i}$.   
 In particular, in every case there is at most one basis of activity $r$.
\end{theorem}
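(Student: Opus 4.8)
The plan is to count the elements of $\mathcal I$ of each fixed cardinality in two different ways, using the fact that the intervals $[B\setminus\texttt{a}(B),B]$ partition $\mathcal I$ (this is precisely the defining property of an activity). On the one hand, the number of sets of cardinality $i$ in $\mathcal I$ is $c_i$ by definition of the cardinality vector. On the other hand, I would compute this number interval by interval and sum the contributions, grouping the bases according to their activity number.

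First I would fix a basis $B$ with activity number $|\texttt{a}(B)|=k$. Since $B$ is an $r$-set and $\texttt{a}(B)\subseteq B$, every $Z\in[B\setminus\texttt{a}(B),B]$ is obtained from $B$ by deleting some subset of $\texttt{a}(B)$; deleting $t$ of the $k$ active elements yields a set of cardinality $r-t$. Hence the sets of cardinality $i$ lying in this single interval are in bijection with the $(r-i)$-element subsets of $\texttt{a}(B)$, and there are exactly $\binom{k}{r-i}$ of them (interpreted as $0$ when $r-i>k$). Summing over all bases and using that there are $a_k$ bases of activity number $k$, the partition property gives, for each $0\le i\le r$,
\[
c_i=\sum_{k=r-i}^{r} a_k\binom{k}{r-i}.
\]
This identity (which one can immediately verify against the worked example, e.g.\ $c_2=a_2\binom{2}{2}+a_3\binom{3}{2}+a_4\binom{4}{2}=2+3+0=5$) is the single substantive ingredient, and the only step requiring care is the cardinality count inside one interval; everything after it is bookkeeping.

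Finally I would observe that this linear system is unitriangular and invert it. The smallest-index term in the sum, $k=r-i$, contributes $a_{r-i}\binom{r-i}{r-i}=a_{r-i}$, so isolating it yields
\[
a_{r-i}=c_i-\sum_{j=r-i+1}^{r} a_j\binom{j}{r-i},
\]
which is exactly the stated recursion after relabeling the summation index $k$ as $j$; taking $i=0$ recovers the base case $a_r=c_0$. The uniqueness claim is then immediate: since $a_r=c_0$ and, by definition of the cardinality vector, $c_0$ equals $1$ if $\emptyset\in\mathcal I$ and $0$ otherwise, there is at most one basis of activity $r$. I do not expect a genuine obstacle here, since the hard combinatorial content is supplied by the partition property of Theorem~\ref{partition} (built into the definition of activity), and the remainder is a routine triangular inversion.
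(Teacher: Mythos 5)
Your proposal is correct and follows essentially the same route as the paper: both double-count the independent sets of each cardinality via the interval partition $[B\setminus\texttt{a}(B),B]$, observe that a basis of activity $k$ contributes $\binom{k}{r-i}$ sets of cardinality $i$, and invert the resulting unitriangular system. If anything, your write-up is slightly more careful than the paper's, whose prose drops the diagonal term $a_{r-i}\binom{r-i}{r-i}$ from the displayed sum and is loose about whether the cardinality counted is $i$ or $r-i$ --- slips your explicit identity $c_i=\sum_{k=r-i}^{r}a_k\binom{k}{r-i}$ quietly repairs.
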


\begin{proof} From the definition of activity  we know that a base $B$ of activity $\emph{\texttt{a}}(B)$ covers the interval 
$[B\setminus \emph{\texttt{a}}(B),B]$.  Therefore, a base of activity $\emph{\texttt{a}}(B)$ covers $2^{|\emph{\texttt{a}}(B)|}$   
independents, more precisely there are $ {|\emph{\texttt{a}}(B)| \choose r-i}$ independents of cardinality $i$.  Summing over  
all bases and using the fact that the intervals $[B\setminus \emph{\texttt{a}}(B),B]$ form a partition of  all independent sets,  
we get that the number of independent sets of  cardinality $i$ is  $\sum_{j=r-i+1}^{r} a_j {j\choose r- i}$.  This completes the proof.
\end{proof}

From Theorem \ref{Theorem:Act:Cardi:Vec} we obtain a relation between the activity polynomial $\alpha(x)$ and the cardinality 
polynomial $\card(x)=\sum c_i x^i$. We state it formally in this corollary. 

\begin{corollary}\label{TomCorollary} If $A=(a_0,\ldots,a_n)$ is an activity vector of $\textit{\texttt{a}}$, then 
$\card(x)=x^n \alpha({1/ x}+1)$.
\end{corollary}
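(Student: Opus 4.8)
The plan is to obtain the identity directly from the interval partition that the activity supplies, recognizing the resulting sum as an evaluation of $\alpha$; this repackages Theorem \ref{Theorem:Act:Cardi:Vec} in generating-function form. Throughout write $n=r$ for the rank. By the definition of an activity, the intervals $[B\setminus\texttt{a}(B),B]$, as $B$ ranges over $\mathcal B$, partition $\mathcal I$. So the first step is to split the cardinality polynomial $\card(x)=\sum_{Z\in\mathcal I}x^{|Z|}$ according to which base each independent set belongs to:
\begin{equation*}
\card(x)=\sum_{B\in\mathcal B}\ \sum_{Z\in[B\setminus\texttt{a}(B),\,B]}x^{|Z|}.
\end{equation*}

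The key step is to evaluate the inner sum for a fixed base. Every $Z$ in $[B\setminus\texttt{a}(B),B]$ is uniquely of the form $Z=(B\setminus\texttt{a}(B))\cup S$ with $S\subseteq\texttt{a}(B)$, so writing $j=|\texttt{a}(B)|$ and $|B|=r$ gives $|Z|=(r-j)+|S|$ and
\begin{equation*}
\sum_{Z\in[B\setminus\texttt{a}(B),\,B]}x^{|Z|}=x^{\,r-j}\sum_{S\subseteq\texttt{a}(B)}x^{|S|}=x^{\,r-j}(1+x)^{j}.
\end{equation*}
Grouping the bases by activity number (there are $a_j$ bases with $|\texttt{a}(B)|=j$) and factoring out $x^{r}$ then yields
\begin{equation*}
\card(x)=\sum_{j=0}^{r}a_j\,x^{\,r-j}(1+x)^{j}=x^{r}\sum_{j=0}^{r}a_j\Bigl(\tfrac{1+x}{x}\Bigr)^{j}=x^{r}\,\alpha\Bigl(\tfrac{1}{x}+1\Bigr),
\end{equation*}
since $\alpha(y)=\sum_j a_j y^{j}$ and $(1+x)/x=1/x+1$. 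As $n=r$, this is exactly the claimed identity. I would sanity-check against the worked example, where $\alpha(x)=1+2x^2+x^3$ should produce $x^4\alpha(1/x+1)=x+5x^2+7x^3+4x^4$, matching $C=(0,1,5,7,4)$.

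There is no genuine obstacle here, as the combinatorial content is already carried by Theorem \ref{Theorem:Act:Cardi:Vec}; the only place to be careful is the index reversal in the inner sum. One must keep the exponent of $x$ attached to $|Z|$ itself rather than to the complementary size, so that the substitution emerges as $1/x+1$ and not $1+x$. The cleanest safeguard is the coordinate-free interval computation above, which makes the $(1+x)^{j}$ factor manifest and avoids manipulating the binomial coefficients by hand. Alternatively, one may substitute the explicit relation $c_i=\sum_{j}a_j\binom{j}{r-i}$ from Theorem \ref{Theorem:Act:Cardi:Vec} into $\card(x)=\sum_i c_i x^{i}$ and reindex by $t=r-i$, using $\sum_{t}\binom{j}{t}x^{\,r-t}=x^{r}(1+1/x)^{j}$; this lands on the same evaluation $x^{r}\alpha(1/x+1)$.
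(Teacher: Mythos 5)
Your proof is correct and takes essentially the same route as the paper: the paper deduces the corollary directly from Theorem \ref{Theorem:Act:Cardi:Vec}, whose own proof is exactly your interval-partition count, stated coefficientwise as $c_i=\sum_j a_j\binom{j}{r-i}$ rather than in generating-function form. Your computation $\sum_{Z\in[B\setminus\texttt{a}(B),\,B]}x^{|Z|}=x^{r-j}(1+x)^{j}$ merely repackages that same count into the polynomial identity (with the correct reading $n=r$), and your check against the worked example $C=(0,1,5,7,4)$ is accurate.
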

 
The following result is a corollary of Corollary \ref{TomCorollary}\footnote{T. Zaslavsky suggested to call it  ``corollorollary''}. 

\begin{corollary}
All activities of a given $r$-covering system $\mathcal{S}$ have the same activity vector.
\end{corollary}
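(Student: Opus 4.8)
The plan is to reduce everything to Theorem \ref{Theorem:Act:Cardi:Vec}, using the simple but decisive observation that the cardinality vector is an invariant of the covering system itself and carries no information about any particular activity.

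First I would point out that the cardinality vector $C=(c_0,\ldots,c_r)$ is defined purely in terms of $\mathcal{S}=(E,\mathcal I)$: the entry $c_i$ counts the independent sets of cardinality $i$, a quantity that never mentions a choice of activity map $\texttt{a}$. Consequently, if two (possibly different) activities are given on the same covering system $\mathcal{S}$, they share one and the same cardinality vector $C$.

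Next I would invoke Theorem \ref{Theorem:Act:Cardi:Vec}, which recovers the activity vector from $C$ through the recursion $a_r=c_0$ and $a_{r-i}=c_i-\sum_{j=r-i+1}^{r} a_j\binom{j}{r-i}$. The crucial feature is that this is a triangular system which can be solved from the top down: $a_r$ is read off immediately, and then each $a_{r-i}$ is expressed explicitly in terms of the already-determined higher-index entries $a_{r-i+1},\ldots,a_r$, since the summation index $j$ stays strictly above $r-i$ and so $a_{r-i}$ never appears on its own right-hand side. Hence the assignment $C\mapsto A$ is single-valued: the activity vector is a well-defined function of the cardinality vector alone.

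Combining the two observations finishes the argument: whichever activity we start from, its activity vector is obtained by applying the same deterministic recursion to the same $C$, so all activities of $\mathcal{S}$ produce identical activity vectors. I do not expect a genuine obstacle here, since the entire substance sits in Theorem \ref{Theorem:Act:Cardi:Vec}; the only point needing a moment's care is the uniqueness of the recursion's solution, which is immediate from its triangular shape.
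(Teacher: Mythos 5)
Your proposal is correct and takes essentially the same route as the paper, which presents this statement as an immediate consequence of Corollary~\ref{TomCorollary} --- itself just the polynomial repackaging of the triangular recursion of Theorem~\ref{Theorem:Act:Cardi:Vec} that you invoke directly. Your two observations (that the cardinality vector $C$ depends only on $\mathcal{S}$ and not on the activity, and that the recursion determines $A$ from $C$ uniquely by its triangular shape) are exactly the intended content, and mirror how the paper proves the analogous Theorem~\ref{MainThmSimilar}.
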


Two $r$-covering systems  $\mathcal{S}_1:=(E_1, \mathcal{I}_1)$  and $\mathcal{S}_2:=(E_2, \mathcal{I}_2)$,  with $|E_1|=|E_2|$,  
are \emph{similar } if  there is a bijection  $\psi $ from $\mathcal{I}_1$ to $\mathcal{I}_2$  such that for every $X \in \mathcal{I}_1$,    
$|\psi (X)|=|X|$.  Two similar covering systems have necessarily the same rank.

The following theorem makes the definition of activities works. Thus, the sizes of the various intervals are invariant. 
The proof of this theorem justifies defining polynomials that do not depend on the specific interval partition. Thus, it justifies the partition 
definition of activities.

\begin{theorem}\label{MainThmSimilar}  If  $\mathcal{S}$ and  $\mathcal{S}^{\prime}$ are similar $r$-covering systems with  activities 
$\texttt{a}$ and $\texttt{a}^{\prime}$, then their activity vectors  $A$ and $A^{\prime}$ are equal. 
\end{theorem}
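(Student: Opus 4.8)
The plan is to reduce the statement to Theorem \ref{Theorem:Act:Cardi:Vec}, which shows that the activity vector of any $r$-covering system is completely determined by its cardinality vector. The key observation is that similarity forces the two systems to share a common cardinality vector, after which equality of the activity vectors is automatic.

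First I would unwind the definition of similarity. By hypothesis there is a bijection $\psi\colon \mathcal{I}_1\to\mathcal{I}_2$ with $|\psi(X)|=|X|$ for every $X\in\mathcal{I}_1$. Such a $\psi$ restricts, for each $i$, to a bijection between the independent sets of cardinality $i$ in $\mathcal{S}$ and the independent sets of cardinality $i$ in $\mathcal{S}'$. Hence if $C=(c_0,\ldots,c_n)$ and $C'=(c_0',\ldots,c_n')$ denote the cardinality vectors of $\mathcal{S}$ and $\mathcal{S}'$, then $c_i=c_i'$ for all $i$; that is, $C=C'$.

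Next I would invoke Theorem \ref{Theorem:Act:Cardi:Vec}. That result expresses the activity vector entirely in terms of the cardinality vector by the explicit recursion $a_r=c_0$ and $a_{r-i}=c_i-\sum_{j=r-i+1}^r a_j\binom{j}{r-i}$, with no reference to the particular activity $\texttt{a}$ chosen. Applying this recursion to $\mathcal{S}$ with activity $\texttt{a}$ and to $\mathcal{S}'$ with activity $\texttt{a}'$, and using $C=C'$, one gets $a_{r-i}=a_{r-i}'$ for every $i$ by induction on $i$: the base case $a_r=c_0=c_0'=a_r'$ starts the induction, and for $i\ge 1$ the entry $a_{r-i}$ is a fixed function of $c_i$ and the already-equal higher-index entries $a_{r-i+1},\ldots,a_r$. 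Therefore $A=A'$.

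Because the argument rests entirely on the determinacy already packaged in Theorem \ref{Theorem:Act:Cardi:Vec}, I expect no real obstacle beyond checking that similarity transfers to equality of cardinality vectors, which is immediate from the cardinality-preserving property of $\psi$. As an alternative to the recursion, one could argue through Corollary \ref{TomCorollary}: the identity $\card(x)=x^n\alpha(1/x+1)$ is an invertible change of variables relating the cardinality polynomial to the activity polynomial, so equal cardinality polynomials force equal activity polynomials, and hence equal activity vectors.
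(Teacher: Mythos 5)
Your proof is correct and follows essentially the same route as the paper: similarity forces the cardinality vectors to coincide, and Theorem \ref{Theorem:Act:Cardi:Vec} then determines the activity vector from the cardinality vector alone. The paper states this in two sentences; your version merely spells out the inductive use of the recursion, which the paper leaves implicit.
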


\begin{proof}
From the definition of similarity, the number of sets of a given cardinality $k$ is the same in $\mathcal{S}$ and in $\mathcal{S}^{\prime}$, 
i.e., $c_k=c_k^{\prime}$ for every $1\le k\le n$. The conclusion follows from Theorem \ref{Theorem:Act:Cardi:Vec}.
\end{proof}

\begin{proposition}
Let $\mathcal{S}:=(E,\mathcal{I})$ be an $r$-covering system with bases $\mathcal B$ and with activity function $\texttt{a}$.   
For nonempty $\mathcal B^{\prime}\subseteq \mathcal B$  let 
$\mathcal I^{\prime}=\{I\in\mathcal{I} \mid \exists B\in \mathcal{B}^{\prime} \text{ such that } B\setminus \texttt{a}(B)\subseteq I \subseteq B\}$.  
Then $\mathcal{S}^{\prime}:=(E,\mathcal I^{\prime})$ is an $r$-covering system and $\texttt{a}^{\prime}=\texttt{a}|_{\mathcal B^{\prime}}$ is an activity. 
\end{proposition}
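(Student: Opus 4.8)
The plan is to exploit the fact that $\mathcal{I}'$ is, by construction, nothing other than the union of those blocks of the activity partition of $\mathcal{I}$ that are indexed by $\mathcal{B}'$; that is, writing $[B] := [B\setminus\texttt{a}(B),B]$ for $B\in\mathcal{B}$, we have $\mathcal{I}'=\bigcup_{B\in\mathcal{B}'}[B]$. Since these blocks are pairwise disjoint (they partition $\mathcal{I}$ by the definition of an activity) and each is contained in $\mathcal{I}$, almost every required property of $\mathcal{S}'$ can be inherited from $\mathcal{S}$ by simply restricting attention to this subfamily.

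First I would check that $\mathcal{S}'$ is an $r$-covering system. That $\mathcal{I}'$ consists of subsets of $E$ of cardinality at most $r$ is immediate from $\mathcal{I}'\subseteq\mathcal{I}$. For the covering axiom, given $I\in\mathcal{I}'$ choose a witnessing $B\in\mathcal{B}'$ with $B\setminus\texttt{a}(B)\subseteq I\subseteq B$. This $B$ is an $r$-set, and it lies in $\mathcal{I}'$ because $B\in[B]$; moreover for any $Z$ with $I\subseteq Z\subseteq B$ one has $B\setminus\texttt{a}(B)\subseteq Z\subseteq B$, so $Z\in[B]\subseteq\mathcal{I}'$. Hence $[I,B]\subseteq\mathcal{I}'$, which is exactly the covering condition.

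Next I would pin down the bases of $\mathcal{S}'$, and the key claim is that they are precisely $\mathcal{B}'$. Each $B\in\mathcal{B}'$ is an $r$-set in $\mathcal{I}'$, hence a basis of $\mathcal{S}'$. Conversely, an arbitrary $r$-set $B^\ast\in\mathcal{I}'$ lies in $\mathcal{I}$ and so is a basis of $\mathcal{S}$; being in $\mathcal{I}'$, it is covered by some $B\in\mathcal{B}'$, and since an $r$-set contained in the $r$-set $B$ must equal $B$, we get $B^\ast=B\in\mathcal{B}'$. With the bases identified, verifying that $\texttt{a}'=\texttt{a}|_{\mathcal{B}'}$ is an activity becomes routine: $\texttt{a}'(B)\subseteq B$ is inherited, $[B\setminus\texttt{a}'(B),B]=[B]\subseteq\mathcal{I}'$ holds by the very definition of $\mathcal{I}'$, existence of a covering basis for each $I\in\mathcal{I}'$ is again the definition of $\mathcal{I}'$, and uniqueness follows because $\mathcal{B}'\subseteq\mathcal{B}$ and $\mathcal{I}'\subseteq\mathcal{I}$, so the global uniqueness supplied by the activity $\texttt{a}$ on $\mathcal{S}$ restricts to $\mathcal{S}'$.

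The argument is essentially bookkeeping once the partition viewpoint is adopted, so there is no serious obstacle; the only point requiring genuine care is the identification of the bases of $\mathcal{S}'$ with $\mathcal{B}'$. One must make sure that no spurious $r$-set sneaks into $\mathcal{I}'$ as a new basis, and that $\mathcal{B}'$ being merely nonempty (rather than closed under anything) does not break the unique-cover axiom; both are handled by the ``$r$-set inside an $r$-set'' observation and by the fact that uniqueness is a property that only weakens under passage to a subfamily.
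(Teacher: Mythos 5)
Your proof is correct and takes essentially the same route as the paper, whose entire proof is the one-line observation that the intervals $[B\setminus\texttt{a}(B),B]$ for $B\in\mathcal{B}^{\prime}$ form a partition of $\mathcal{I}^{\prime}$. You have simply expanded that observation into a full verification (including the worthwhile check that the bases of $\mathcal{S}^{\prime}$ are exactly $\mathcal{B}^{\prime}$, which the paper leaves implicit), so no further changes are needed.
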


\begin{proof}
The intervals $[B\setminus \emph{\texttt{a}}^{\prime}(B),B] = [B\setminus \emph{\texttt{a}}(B),B]$ for $B \in \mathcal{B}^{\prime}$ form a partition of $\mathcal{I}^{\prime}$.
\end{proof}

\begin{proposition}\label{UnionCoveringSystem}
Let $\mathcal{S}_{1}:=(E,\mathcal{I}_{1})$ and  $\mathcal{S}_{2}:=(E,\mathcal{I}_{2})$ be $r$-covering systems with  activities  
$\texttt{a}_{1}$ and $\texttt{a}_{2}$. If  $\mathcal{I}_{1} \cap \mathcal{I}_{2}=\emptyset$, then 
$\mathcal{S}:=(E,\mathcal{I}_{1} \cup \mathcal{I}_{2})$ is a covering system with activity defined by 
 \[
\texttt{a}(B)=
\begin{cases}
\texttt{a}_{1}(B), & \text{ if } B \in \mathcal{I}_{1}, \\
\texttt{a}_{2}(B), & \text{ if } B \in \mathcal{I}_{2}.
\end{cases}
\]
\end{proposition}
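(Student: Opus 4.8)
The plan is to verify the two defining properties in turn: first that $\mathcal{S}=(E,\mathcal{I}_1\cup\mathcal{I}_2)$ is an $r$-covering system, and then that the piecewise function $\texttt{a}$ meets the requirements of an activity. Throughout I would write $\mathcal{B}_1$ and $\mathcal{B}_2$ for the bases of $\mathcal{S}_1$ and $\mathcal{S}_2$, and record at the outset the structural fact that the bases of $\mathcal{S}$ are exactly $\mathcal{B}_1\cup\mathcal{B}_2$, as a disjoint union: every $r$-set of $\mathcal{I}_1\cup\mathcal{I}_2$ lies in exactly one of $\mathcal{I}_1$ or $\mathcal{I}_2$ because these are disjoint, so it is a basis of precisely one of the two systems. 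This disjointness is what makes the case-split definition of $\texttt{a}$ unambiguous.

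For the covering-system property, first note that every member of $\mathcal{I}_1\cup\mathcal{I}_2$ has cardinality at most $r$, since each of $\mathcal{I}_1,\mathcal{I}_2$ does. For the covering condition, take any $I\in\mathcal{I}_1\cup\mathcal{I}_2$; without loss of generality $I\in\mathcal{I}_1$. Because $\mathcal{S}_1$ is an $r$-covering system there is an $r$-set $B\in\mathcal{I}_1$ with $[I,B]\subseteq\mathcal{I}_1$, and since $\mathcal{I}_1\subseteq\mathcal{I}_1\cup\mathcal{I}_2$ the same $B$ witnesses the covering condition in $\mathcal{S}$. Hence $\mathcal{S}$ is an $r$-covering system.

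The cleanest route to the activity condition is through the partition characterization used earlier in this section: it suffices to show that the intervals $[B\setminus\texttt{a}(B),B]$, ranging over $B\in\mathcal{B}_1\cup\mathcal{B}_2$, partition $\mathcal{I}_1\cup\mathcal{I}_2$. By hypothesis the intervals $[B\setminus\texttt{a}_1(B),B]$ with $B\in\mathcal{B}_1$ partition $\mathcal{I}_1$, and likewise the intervals from $\mathcal{B}_2$ partition $\mathcal{I}_2$; on each basis $\texttt{a}$ agrees with the corresponding $\texttt{a}_i$, so these are exactly the intervals in play. Each $\mathcal{B}_1$-interval is contained in $\mathcal{I}_1$ and each $\mathcal{B}_2$-interval in $\mathcal{I}_2$, and I would note in passing that $\texttt{a}(B)\subseteq B$ holds on each piece, so the set-and-interval containment requirement of the definition is inherited directly.

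The one point that requires the disjointness hypothesis --- and the step I expect to carry the real content --- is ruling out any overlap between the two families of intervals, i.e.\ any ``cross-covering'' of an $I\in\mathcal{I}_1$ by a basis $B\in\mathcal{B}_2$. But if $I$ lay in $[B\setminus\texttt{a}(B),B]$ for some $B\in\mathcal{B}_2$, then, since that whole interval is contained in $\mathcal{I}_2$, we would get $I\in\mathcal{I}_2$, contradicting $I\in\mathcal{I}_1$ together with $\mathcal{I}_1\cap\mathcal{I}_2=\emptyset$. Thus no set is covered by intervals from both families; the two given partitions sit in disjoint parts of $\mathcal{I}_1\cup\mathcal{I}_2$, and their union is therefore a partition of $\mathcal{I}_1\cup\mathcal{I}_2$. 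Translating this partition back into the existence-and-uniqueness language of the definition then yields precisely the activity axioms for $\texttt{a}$, which completes the proof.
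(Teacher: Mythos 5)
Your proof is correct and takes essentially the same route as the paper, whose entire proof is the one-line observation that the intervals $[B\setminus\texttt{a}_{1}(B),B]$ for $B\in\mathcal{B}_{1}$ together with $[B\setminus\texttt{a}_{2}(B),B]$ for $B\in\mathcal{B}_{2}$ form a partition of $\mathcal{I}_{1}\cup\mathcal{I}_{2}$. You simply make explicit the routine details the paper leaves implicit (the bases of $\mathcal{S}$ are the disjoint union $\mathcal{B}_{1}\cup\mathcal{B}_{2}$, the covering condition is inherited, and disjointness of $\mathcal{I}_{1}$ and $\mathcal{I}_{2}$ rules out cross-covering), all of which is sound.
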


\begin{proof}
The intervals $[B\setminus \emph{\texttt{a}}_{1}(B),B]$ for $B \in \mathcal{B}_{1}$ together with the intervals 
$[B\setminus \emph{\texttt{a}}_{2}(B),B]$ for $B \in \mathcal{B}_{2}$ form a partition of $\mathcal{I}_{1}\cup\mathcal{I}_{2}$.
\end{proof}

Let $\mathcal{S}:=(E,\mathcal{I})$  be an $r$-covering system with an activity  $\textit{\texttt{a}}$ and let  
$X$ be an element of $\mathcal{I}$. We define $\mathcal{I}_{X}:=\{I \in \mathcal{I}: X \subseteq I \}$, 
$\mathcal{B}_{X}:=\{X \in \mathcal{B}: X \subseteq B \}$, and $\textit{\texttt{a}}_{X}(B):=\textit{\texttt{a}}(B)\setminus X$, for every $B\in\mathcal{B}_{X}$.

\begin{proposition}\label{XCoveringSystem}
Let $\mathcal{S}:=(E,\mathcal{I})$  be an $r$-covering system with an activity  $\textit{\texttt{a}}$ and let  
$X \in \mathcal{I}$. Then
$\mathcal{S}_X:=(E,\mathcal{I}_{X})$ is an $r$-covering system and $\texttt{a}_{X}$ is an activity of $\mathcal{S}_X$.
\end{proposition}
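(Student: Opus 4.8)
The plan is to establish the two assertions—that $\mathcal{S}_X = (E,\mathcal{I}_X)$ is an $r$-covering system and that $\texttt{a}_X$ is an activity for it—simultaneously, by showing that the partition of $\mathcal{I}$ furnished by $\texttt{a}$ restricts cleanly to a partition of $\mathcal{I}_X$ whose blocks are exactly the intervals $[B\setminus\texttt{a}_X(B),B]$ with $B\in\mathcal{B}_X$. The first thing I would record is that the $r$-sets lying in $\mathcal{I}_X$ are precisely the members of $\mathcal{B}_X$: an $r$-set $B$ belongs to $\mathcal{I}_X$ iff $B\in\mathcal{I}$ and $X\subseteq B$, that is, iff $B\in\mathcal{B}$ with $X\subseteq B$. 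Hence $\mathcal{B}_X$ is the candidate set of bases of $\mathcal{S}_X$, and $\texttt{a}_X$ is defined on exactly the right domain.

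The key computation is the set identity, valid because $X\subseteq B$ for $B\in\mathcal{B}_X$,
\[
B\setminus\texttt{a}_X(B) = B\setminus(\texttt{a}(B)\setminus X) = (B\setminus\texttt{a}(B))\cup X .
\]
From it I would deduce, for each $B\in\mathcal{B}_X$, the identification $[B\setminus\texttt{a}_X(B),B] = [B\setminus\texttt{a}(B),B]\cap\mathcal{I}_X$: a set $Z$ with $B\setminus\texttt{a}(B)\subseteq Z\subseteq B$ satisfies $X\subseteq Z$ iff $(B\setminus\texttt{a}(B))\cup X\subseteq Z$, which is exactly the requirement $B\setminus\texttt{a}_X(B)\subseteq Z\subseteq B$. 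Moreover, no block $[B\setminus\texttt{a}(B),B]$ with $B\notin\mathcal{B}_X$ can meet $\mathcal{I}_X$, since a member $Z$ with $X\subseteq Z\subseteq B$ would force $X\subseteq B$, contradicting $B\notin\mathcal{B}_X$.

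Next I would invoke the defining partition property of $\texttt{a}$ on $\mathcal{S}$, namely that the intervals $[B\setminus\texttt{a}(B),B]$ for $B\in\mathcal{B}$ partition $\mathcal{I}$. Intersecting each block with $\mathcal{I}_X\subseteq\mathcal{I}$ then partitions $\mathcal{I}_X$, and by the previous step the only nonempty blocks are the $[B\setminus\texttt{a}_X(B),B]$ for $B\in\mathcal{B}_X$. This one statement yields both conclusions. For the covering axiom, given $I\in\mathcal{I}_X$ take the base $B\in\mathcal{B}_X$ of the block containing $I$; then $B$ is an $r$-set of $\mathcal{I}_X$ and $[I,B]\subseteq[B\setminus\texttt{a}_X(B),B]\subseteq\mathcal{I}_X$. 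For the activity axioms, $\texttt{a}_X(B)\subseteq\texttt{a}(B)\subseteq B$ is immediate, the containment $[B\setminus\texttt{a}_X(B),B]\subseteq\mathcal{I}_X$ holds because this interval is a block, and the existence and uniqueness of a covering base for each $I\in\mathcal{I}_X$ is precisely the partition just exhibited.

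I do not expect a genuine obstacle here; the argument is essentially bookkeeping organized around the restrict-and-intersect picture. The single place demanding care is the identity $B\setminus(\texttt{a}(B)\setminus X)=(B\setminus\texttt{a}(B))\cup X$, which relies on $X\subseteq B$ and is exactly what makes the blocks of $\mathcal{S}$ restrict to the blocks of $\mathcal{S}_X$. It is worth a glance at the degenerate cases, such as $X=\emptyset$ (where $\mathcal{S}_X=\mathcal{S}$ and $\texttt{a}_X=\texttt{a}$) or $X$ already a basis, but these are subsumed by the general partition argument and need no separate treatment.
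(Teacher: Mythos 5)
Your proof is correct and takes essentially the same route as the paper's: intersect each block $[B\setminus\texttt{a}(B),B]$ of the partition of $\mathcal{I}$ with $[X,E]$, observe that the nonempty intersections are exactly the intervals $[(B\setminus\texttt{a}(B))\cup X,\,B]=[B\setminus\texttt{a}_X(B),\,B]$ for $B\in\mathcal{B}_X$, and conclude that these partition $\mathcal{I}_X$. You merely spell out details the paper leaves implicit (blocks with $X\not\subseteq B$ miss $\mathcal{I}_X$, and the covering axiom follows from the blocks themselves), and you state the key identity with the correct hypothesis $X\subseteq B$, where the paper's parenthetical remark has a typo (``$B\subseteq X$'').
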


\begin{proof}
The set $\mathcal{I}_{X}$  is the intersection of $\mathcal{I}$ and the interval $[X,E]$. Therefore, every interval  
$[B\setminus \emph{\texttt{a}}(B),B]$ in the partition of $\mathcal{I}$ that intersects $[X,E]$ gives the interval  
$[(B\setminus\emph{\texttt{a}}(B))\cup X,B]$, if $X\subseteq B$. (Note that  
$(B\setminus\emph{\texttt{a}}(B))\cup X=B\setminus ( \emph{\texttt{a}}(B)\setminus X)$ if $X\subseteq B$.)  They form a partition of $\mathcal{I}_{X}$.
\end{proof}

\subsection{ Examples of covering systems with activities}

\begin{example}[\bf \emph{$r$-pure independence system}]

Let $E$ be a finite set. We say that a collection $\mathcal{I}$ of subsets of $E$ is an \emph{$r$-pure independence system} if   
every maximal element in  $\mathcal{I}$ has cardinality $r$ and if for each $I \in \mathcal{I}$, every subset of $I$ is also in $\mathcal{I}$.   
Clearly, every $r$-pure independence system is an $r$-covering system.  
But an $r$-pure independence system does not necessarily have an activity. For example, let $(E,\mathcal{I})$ be a $2$-pure independence  
system with $E=\{1,2,3,4\}$, where $\mathcal{I}$ has exactly two maximal elements, $B_1=\{1,2\}$ and $B_2=\{3,4\}$. We claim that 
$(E,\mathcal{I})$  does not have an activity. Indeed, suppose that $\emph{\texttt{a}}$ is such an activity; then to ``cover" the empty 
set we need $\emph{\texttt{a}}(B_i)=B_i$  for either $i=1$ or $i=2$. Let us suppose that  $\emph{\texttt{a}}(\{1,2\})=\{\textsf{1},\textsf{2}\}$  
(the other case is similar); to cover $\{3\}$ we need  $\emph{\texttt{a}}(\{3,4\})=\{ \textsf{4}\}$, but then we do not have any possibility  
to cover $\{4\}$, which is a contradiction. 

Another way to prove that there is not an activity is by Theorem \ref{Theorem:Act:Cardi:Vec}.  
Here the  cardinality vector of  $(E, \mathcal{I})$ is given by $C=(1,4,2)$. From Theorem \ref{Theorem:Act:Cardi:Vec} 
we know that if there is an activity vector it is  $A=(-\textsf{1},\textsf{2},\textsf{1})$. Since the first entry is negative, there is no such activity. 
\end{example}

\begin{example}[\bf matroids]
The set of independent sets of a matroid and the set of NBC sets of a matroid are examples of $r$-pure independence systems.   
The activity defined by the order of the elements as explained in Section \ref{ActivityNBCSets} is an activity for both of them by Theorem \ref{partition}.

For the set of NBC sets the activity polynomial $\alpha_{0}$ is related to the Tutte polynomial by $\alpha_{0}(x)=\Tutte(x,0)$. So,
$\alpha_{0}(1)=\Tutte(1,0)$ is the number of NBC bases and $\alpha_{0}(2)=\Tutte(2,0)$ is the number of NBC sets.  
The number $\alpha_{0}(2)$ is, at the same time, the number of regions of the arrangement when the matroid is 
 defined by an arrangement of hyperplanes.  

For the set of independent sets the activity polynomial $\alpha_{1}$ is related to the Tutte polynomial by $\alpha_{1}(x)=\Tutte(x,1)$. So, 
$\alpha_{1}(1)=\Tutte(1,1)$ is the number of bases and $\alpha_{1}(2)=\Tutte (2,1)$ is the number of  independents sets of the matroid.
\end{example}

\begin{example}[\bf colored rooted forests] 

Let $E$ be the set of all directed edges with vertices in the set  $V=\{1,2, \dots, n\}$. Let $\mathcal{I}$ be the set of rooted forests on the set $V$.  
The rooted forests are such that every vertex has at most one incoming   edge; the roots are the vertices with indegree zero. 
In a rooted forest, each component is a directed tree and has exactly 
one root. The set $B$ of edges   of a connected  acyclic graph with $n$ vertices is defined as a base in $\mathcal{B}$. The couple  
$(E, \mathcal{I})$ is an $(n-1)$-covering system on the set of ordered edges $\{(i,j): 1\le i \ne j \le n\}$.  
It is a pure independence system. 
The subset of decreasing rooted forests  is still a pure independence system but the set of non-increasing rooted  
forests is not. Indeed, there are some edges that cannot be deleted while keeping the non-increasing property.  For the example, 
the tree defined with a root $r=2$ and edges $(2,1)$ and $(2,3)$ is non-increasing tree, and  if we remove the edge $(2,1)$  
(but not its vertices) we obtain the forest formed by the vertex $1$ and the edge $(2,3)$. So, it does not give a non-increasing forest.  
Therefore, the set of non-increasing forest is not a pure independent system. Nevertheless, it is still a  
covering system, since it is possible to add some edges to any non-increasing trees. A potential activity is the set of edges $(n,v)$ in a tree.
This will be discussed later, in the next section. 
\end{example}

\begin{example}[\bf paths connecting two vertices]
We now give an application of Propositions \ref{UnionCoveringSystem} and \ref{XCoveringSystem}.  Let $G=(V,E)$ be a connected graph, where  
$|V|=n$, and let $a$ and $b$ be distinct vertices. Let $\mathcal{F}_{ab}$  be the set  all forests containing a path connecting $a$ and $b$.  
This set forms an $(n-1)$-covering system. Let  $\mathcal{P}=\{P: P \text{ is a path connecting } a \text{ and } b \}$ and for a fixed path $P$  
connecting $a$ and $b$, let $\mathcal{F}_{P}$ the set of all forests containing $P$.  We have 
$\mathcal{F}_{ab} =\uplus_{P\in \mathcal{P}} \mathcal{F}_{P}$ ($\uplus$ means disjoint union). 
From Propositions \ref{UnionCoveringSystem} and \ref{XCoveringSystem} we see that $\mathcal{F}_{ab}$ is an $(n-1)$-covering system and it  
also has an activity. Note that in this example the empty set is not an independent set. 
\end{example}

\section{activity of colored rooted trees}\label{Sect4}

Corteel et al. \cite{CFM} define some classes of colored rooted trees and show that they correspond to the NBC sets of the  
deformations of the braid arrangement with an interval $[a,b]$ containing 0 or 1.  The original motivating examples were the  
decreasing labeled trees and the local binary search trees. Following Stanley \cite{Stanley}, a   
\emph{local binary search tree} ---LBS tree, for short--- is a labeled rooted binary tree, with vertex set $[n]$, such that every 
left child of a vertex is less than its parent, and every right child is greater than its parent. Note that a parent may have only 
one child. The number of LBS trees on the set $[n]$ is known  to be equal to the number of regions of the Linial   
arrangement in dimension $n$; see \cite{PS}.

In general, if $T$ is $(k_1,k_2)$-decreasing (increasing), then $T$ is $(k_1-\alpha,k_2+\alpha)$-decreasing (increasing) if $k_1-\alpha\ge 0$. 

The following theorem shows a  relation between the number of NBC bases  
and the total number of decreasing (non-increasing)  colored trees.  
Part \eqref{CFMPart1}  corresponds to the Shi arrangement with interval $[0,1]$ and to the generalizations $[-k_2,1+k_2]$, 
Part \eqref{CFMPart2}  corresponds to the so called Catalan arrangements $[-k_2,k_2]$; 
and to some generalization $[-k_2,k_1+k_2]$; 
Part \eqref{CFMPart3} corresponds to the Linial arrangement $[1,1]$ and to some generalizations 
$[1,k_1]$, and  $[1-k_1,k_1+k_2]$.

The proofs of  Theorem \eqref{CFM} Parts \eqref{CFMPart1} and \eqref{CFMPart2} are in  \cite{CFV} and the proof of  Part
\eqref{CFMPart3} is in \cite{Forge}.

\begin{theorem}\label{CFM} Let $k_1$ and $k_2\ge 0$. If $K_n^{[a,b]}$ is the gain graph with vertices $V=[n]$, then:
\begin{enumerate}
\item \label{CFMPart1} there is a bijection between the set of NBC sets of $K_n^{[-k_2,1+k_2]}$ and the set of forests with $k_2$ free colors.

\item \label{CFMPart2}   There is a bijection between the set of NBC sets of $K_n^{[-k_2,k_1+k_2]}$ and the set of $(k_1+1,k_2)$-decreasing colored forests. 

\item \label{CFMPart3} There is a bijection between the set of NBC sets of $K_n^{[1-k_2,k_1+1+k_2]}$ and the sets of $(k_1+1,k_2)$-non-increasing colored forests.

\end{enumerate}
\end{theorem}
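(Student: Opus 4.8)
The plan is to construct, in each case, an explicit cardinality-preserving bijection between the NBC sets of $K_n^{[a,b]}$ and the relevant family of colored rooted forests, and then to reduce the whole theorem to a single local equivalence of conditions. The key observation is that both sides are the same kind of datum: a simple forest on $[n]$ together with a label on each edge. For an NBC set the label is the gain $g\in[a,b]$ of the edge; for a target forest it is a color in $[1,b-a+1]$ together with the parent--child orientation. So the first, purely structural, step is to record that an independent set of the affinographic semimatroid has a simple forest as underlying graph (each pair of vertices is used at most once, a doubled edge being a dependent $2$-configuration), to root every component at its least vertex, and to orient each edge from parent to child. This turns an NBC set into a rooted forest whose edges carry gains, which is precisely the ambient set in which the colored forests of Section~\ref{CFM} live.

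The second step is to fix the dictionary between gains and colors, and this is where the discrepancy in the counts is resolved. In Part~\eqref{CFMPart2} the interval $[-k_2,k_1+k_2]$ carries $k_1+2k_2+1$ gains, whereas a $(k_1+1,k_2)$-colored forest uses only $k_1+k_2+1$ colors. The reconciliation is a \emph{folding}: the $k_1+1$ central gains $0,1,\dots,k_1$ match bijectively the $k_1+1$ non-free colors, while the $2k_2$ extreme gains $\{-k_2,\dots,-1\}\cup\{k_1+1,\dots,k_1+k_2\}$ fold in pairs onto the $k_2$ free colors, the two members of a pair being told apart by the orientation (child smaller or child larger) of the edge. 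A non-free color fixes its orientation through the decreasing condition, so it accounts for a single gain; a free color leaves the orientation unconstrained, so it accounts for two gains. This is exactly what makes the two counts agree, and I would write the resulting map $\psi$ and its inverse down explicitly and check that $\psi$ preserves the number of edges, so that NBC sets of each cardinality are sent to colored forests of the same size.

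The heart of the proof, and the step I expect to be the main obstacle, is the local lemma that, after applying $\psi$, the no-broken-circuit condition is equivalent to the $(k_1+1,k_2)$-decreasing condition (respectively non-increasing in Part~\eqref{CFMPart3}). To prove it I would classify the minimal balanced circles of $K_n^{[a,b]}$ that can occur as broken circuits under the chosen edge order (gains ascending, ties broken by endpoints): the smallest are the balanced triangles on $i<j<l$ with $g_{ij}+g_{jl}=g_{il}$, whose order-minimal edge, when deleted, leaves a forbidden $2$-path. The content is then to show that a rooted gain-forest contains no such broken circuit if and only if, at every internal vertex $v$, the child reached by the minimum-color edge is smaller than $v$. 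Two points need care. First, the extreme gains never appear as the minimal edge of a balanced broken circuit, which is exactly why the decreasing test skips the free colors, so they genuinely impose no constraint. Second, in Part~\eqref{CFMPart3} the interval $[1-k_2,k_1+1+k_2]$ contains no gain $0$, the minimal balanced configurations are shifted accordingly, and the forced inequality weakens from strict descent to the non-increasing condition; this Linial case is the most delicate, and I would treat it last, mirroring the Catalan argument of Part~\eqref{CFMPart2} after shifting the gain window by one.

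Finally I would assemble the pieces. By construction $\psi$ and its inverse are mutually inverse, and the local lemma shows $\psi$ restricts to a bijection between NBC sets and decreasing (resp.\ non-increasing) colored forests, giving Parts~\eqref{CFMPart2} and~\eqref{CFMPart3}; Part~\eqref{CFMPart1} is the Shi specialization in which the central window is the two-element set $\{0,1\}$, where the local lemma shows the order condition collapses on the underlying forest and the count reduces to forests with $k_2$ free colors. As an independent consistency check one can compare cardinality vectors before verifying the explicit map: since $\psi$ preserves edge counts the two covering systems are similar in the sense preceding Theorem~\ref{MainThmSimilar}, hence share the same activity vector, and matching this with the enumeration of colored forests of each size via Theorem~\ref{Theorem:Act:Cardi:Vec} confirms the bijection numerically. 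The partition structure of Theorem~\ref{partition} then guarantees that the bijection on all NBC sets is compatible with the interval decomposition over NBC bases, so the activity-$\textsf{0}$ statements used later follow at once.
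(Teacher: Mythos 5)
First, note that the paper does not prove Theorem~\ref{CFM} internally: it cites \cite{CFV} for Parts~\eqref{CFMPart1} and \eqref{CFMPart2} and \cite{Forge} for Part~\eqref{CFMPart3}, and the bijections there are global, recursive constructions that do \emph{not} preserve the underlying forest. Your plan, by contrast, rests on an edge-local dictionary on a fixed underlying forest, and this is where it breaks. Your folding arithmetic assumes that ``a non-free color fixes its orientation through the decreasing condition, so it accounts for a single gain.'' That is false for the definitions in this paper: the $(k_1{+}1,k_2)$-decreasing condition constrains, at an internal vertex $v$, only the children joined by edges of the \emph{minimal} color $c_v$ (and only when $c_v$ is non-free); an edge at $v$ with non-free color $c>c_v$ may point either way. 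Consequently, per pair $\{i,j\}$ the tree side carries $2(k_1{+}k_2{+}1)$ possible (orientation, color) labels against only $k_1{+}2k_2{+}1$ gains, so no injective edge-local dictionary $\psi$ of the kind you describe can exist; the surplus of $k_1{+}1$ must be absorbed by global constraints (rooted-forest validity plus the vertex-local minimal-color condition), which is exactly what a per-edge ``local lemma'' cannot see. Your canonical rooting at the least vertex of each component is also already inconsistent with the targets, whose roots are arbitrary (decreasing forests, for instance, are rooted at component maxima).

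Worse, even the weaker numerical prerequisite of your plan --- that NBC gain patterns and admissible colored rooted structures are equinumerous on each \emph{fixed} underlying forest --- fails for your announced order (gains ascending, ties broken by endpoints). Take Part~\eqref{CFMPart1} with $k_2=0$ and $n=3$, i.e.\ the Shi window $[0,1]$ versus rooted forests. The balanced triangles with gain vectors $(g_{12},g_{23},g_{13})=(0,0,0)$ and $(0,1,1)$ both have minimal edge $0(1,2)$ under your order, so both broken circuits live on the pair set $\{13,23\}$: that forest then supports only $2$ NBC gain assignments but has $3$ rooted orientations, while $\{12,23\}$ supports $4$ against $3$. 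Hence no underlying-forest-preserving bijection exists for your order; an order making the per-forest counts match would have to be specially crafted and shown to exist for every $n$ and every window $[a,b]$, and that is precisely the content you have deferred to the unproven ``local lemma.'' Two smaller points: the interval $[1-k_2,k_1+1+k_2]$ \emph{does} contain the gain $0$ whenever $k_2\ge 1$, so the shift argument you sketch for Part~\eqref{CFMPart3} is misdescribed; and your closing ``consistency check'' via Theorem~\ref{MainThmSimilar} is circular, since similarity of the two covering systems is equivalent to the cardinality-preserving bijection you are trying to build --- in the paper that similarity is the \emph{conclusion} drawn from Theorem~\ref{CFM} on the way to Theorem~\ref{activity1}, not a route to proving it.
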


 This theorem is the main research object by Corteel et al. in \cite{CFM}. From Theorem \ref{CFM} we can deduce many unexpected 
 correspondences. For example, there is a bijection between the NBC sets of $K_n^{[0,2]}$ and both the $(3,0)$ decreasing trees 
 (setting $k_1=2$ and $k_2=0$ in Part \ref{CFMPart2}) and the $(1,1)$ non-increasing trees (setting $k_1=1$ and $k_2=1$ in  
 Part \ref{CFMPart3}). In \cite{CFM} the authors gave  a bijective proof in addition to general proof. 

We recall that given an edge $e$ in a tree, we distinguish one vertex of $e$ as the parent of the second vertex (called child)   
if it is closer to the root. Let $k$ be a positive integer with $k=k_1+k_2$, where $k_1$ and $k_2 \ge 0$ and let $T$ be a tree with vertex 
set equal to $[n]$. If $T$ is either a $(k_1,k_2)$-decreasing  $k$-colored tree or is a $(k_1,k_2)$-non-increasing $k$-colored tree,   
then we define 
$\emph{\texttt{a}}(T)$ as the set of edges having $n$ as their parent and colored with $1$.

\begin{proposition}\label{ActivityP42} Let $k$ be a positive integer with $k=k_1+k_2$, where $k_1$ and $k_2 \ge 0$. If $T$ is either a  
$(k_1,k_2)$-decreasing  $k$-colored tree or is a $(k_1,k_2)$-non-increasing $k$-colored tree, then $\texttt{a}(T)$ is an activity. 
\end{proposition}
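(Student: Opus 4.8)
The plan is to verify directly the two defining conditions of an activity for the function $\texttt{a}$ on the covering system $\mathcal{S}=(E,\mathcal{I})$ whose independent sets $\mathcal{I}$ are the $(k_1,k_2)$-decreasing (respectively non-increasing) $k$-colored forests on $[n]$ and whose bases $\mathcal{B}$ are the corresponding trees, so that $r=n-1$. The single structural observation driving everything is that $n$ is the largest label: at the vertex $n$ every child is smaller than $n$, so whenever $c_n=\overline{\gamma}(n)\le k_1$ the decreasing test $l(n)<n$ and the non-increasing test $f(n)<n$ hold automatically. Consequently the colored condition at $n$ is never an obstruction, regardless of which edges with parent $n$ and color $1$ are present. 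I would treat the decreasing and non-increasing cases together, since only this observation about $n$ is used.

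First I would check that $[B\setminus\texttt{a}(B),B]\subseteq\mathcal{I}$ for each basis (tree) $B$. Removing any subset of the active edges---the color-$1$ edges having $n$ as parent---affects only the set of children of $n$ and turns each detached child into the root of a new component. Since the colored condition at a vertex depends only on its own child-edges, the only vertex whose condition could possibly change is $n$ itself (a detached child $\rho$ retains the same children, hence the same test), and at $n$ the test holds for free. Thus every set in the interval is again a valid $(k_1,k_2)$-decreasing (non-increasing) forest, i.e.\ lies in $\mathcal{I}$. This is precisely where the non-increasing case is delicate, because in general (as noted in the earlier example) not every edge of a non-increasing forest may be deleted; the content here is that the active edges are exactly ones whose deletion is always harmless, again because $n$ is maximal.

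Next I would establish the existence-and-uniqueness condition: for each $I\in\mathcal{I}$ there is a unique basis $B$ with $B\setminus\texttt{a}(B)\subseteq I\subseteq B$. Write the components of the forest $I$ as $C_0\ni n$ and $C_1,\dots,C_m$, with roots $\rho_0,\dots,\rho_m$. Any basis $B\supseteq I$ with $B\setminus I\subseteq\texttt{a}(B)$ is obtained from $I$ by adjoining only color-$1$ edges with parent $n$; since a vertex of a rooted forest has at most one parent, such an edge can reach a component $C_i$ only at its root $\rho_i$, and joining all of $C_1,\dots,C_m$ into a single tree forces exactly the $m$ edges $(n,\rho_i)$ colored $1$ (one per component, and no more, by the edge count of a tree). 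Hence $B=I\cup\{(n,\rho_i):1\le i\le m\}$ is forced, which gives uniqueness. For existence one checks that this $B$ is a basis: it has $n-1$ edges and is connected, and it is $(k_1,k_2)$-decreasing (non-increasing) because the only vertex receiving new children is $n$, where the condition is automatic; moreover $B\setminus\texttt{a}(B)=I\setminus\{\text{color-}1\text{ edges with parent }n\text{ in }I\}\subseteq I\subseteq B$.

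The main obstacle I anticipate is the bookkeeping with colors in the non-increasing case: adjoining color-$1$ edges at $n$ lowers $c_n$ to $1$ and changes which child-edges of $n$ are tested, so one must argue that this re-indexing never produces a violation. The clean way to dispose of this is the maximality of $n$, which makes the test at $n$ vacuously true in every configuration. Once that point is isolated, the remaining verifications are routine, and the partition of $\mathcal{I}$ into the intervals $[B\setminus\texttt{a}(B),B]$ follows from the two conditions just checked.
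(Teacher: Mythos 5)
Your proposal is correct and follows essentially the same route as the paper: the paper's proof also shows that the intervals $[T\setminus\texttt{a}(T),T]$ partition the forests by observing that the unique tree covering a forest $F$ is obtained by adjoining the color-$1$ edges from $n$ to the roots of the components of $F$ not containing $n$. You merely spell out the details the paper leaves implicit (membership of the whole interval in $\mathcal{I}$, the vacuity of the colored condition at the maximal vertex $n$, and the edge-count argument for uniqueness), which strengthens rather than changes the argument.
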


\begin{proof} We prove the case in which a forest $F$ is $k$-colored and $(k_1,k_2)$-decreasing with vertex set $[n]$. The other case is similar and we omit it. 

We want to show that the sets $[T \setminus \emph{\texttt{a}}(T), T]$ form a partition of the set of forests. For any forest $F$ with $d$ 
components $T_1, T_2, \dots, T_d$ not containing the vertex $n$. (There is a $(d+1)$th component $T_0$ containing the vertex $n$.)
The unique tree $T$ such that $F \in [T \setminus \emph{\texttt{a}}(T), T]$ is the tree obtained by adding the $d$ edges $(n,r_i)$ to $F$,  
for $1\le i\le d$ where $r_i$ is the root of the tree $T_i$.
\end{proof}

Our motivation for the rest of this paper is to provide a bijective proof of the formula of Athanasiadis (see equation \eqref{AthanasiadisForm}). 
He  wanted an interpretation similar to the one given by \cite[Theorem 4.1]{StanleyHyperplane} for the number of regions $b(\mathscr{L}_n)$.   
The following theorem answers that question.  

For the following theorem we use $K^{[a,b]}_n$ for the general case. But for each part we give the details how $a$ and $b$ will be. 

\begin{theorem}\label{activity1} Let $k_1$ and $k_2 \ge 0$ and $0\le \alpha \le n-1$. Then 

\begin{enumerate}
\item \label{activity1Part1} The number of NBC bases of $K^{[-k_2,1+k_2]}_n$ with activity $\alpha$ is equal to the number of trees with $k_2$ colors having $\alpha$ edges 
of the form $(n,i)$ colored with $1$. In Particular, the number of bounded regions of the arrangement $K^{[-k_2,1+k_2]}_n$ is equal to the number of trees with  $k_2$ colors having no-edge of the form $(n,i)$ colored with $1$.  

\item  \label{activity1Part2} The number of NBC bases of $K^{[-k_2,k_1+k_2]}_n$ with activity $\alpha$ 
is equal to the number of $(k_1+1,k_2)$ colored decreasing trees with $\alpha$ edges of the form $(n,i)$ colored with $1$. In particular, the number of 
bounded regions of the arrangement $K^{[-k_2,k_1+k_2]}_n$ is equal the number of $(k_1+1,k_2)$ decreasing trees with no-edge on the form 
$(n,1)$ colored with $1$.

\item  \label{activity1Part3} The number of NBC bases of $K^{[1-k_2,k_1+k_2]}_n$ of activity $\alpha$ is equal to the number of $(k_1,k_2)$ colored non-increasing tress with 
$\alpha$ edges  of the form $(n,i)$ colored with $1$. In particular the number of bounded regions of $K^{[1-k_2,k_1+k_2]}_n$ is equal to the number of 
$(k_1,k_2)$  colored non-increasing trees with no-edge of the form $(n,i)$ colored with $1$.
\end{enumerate}

\end{theorem}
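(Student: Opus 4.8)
The plan is to prove all three parts by a single mechanism: for each arrangement I would exhibit \emph{two} covering systems carrying \emph{two} activities, show the systems are similar, and then use that similar covering systems share one activity vector.

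First I would set up the two sides. On one side is the covering system $\mathcal{S}_{\mathrm{NBC}}$ whose independent sets are the NBC sets of $K_n^{[a,b]}$ and whose bases are the NBC bases; by Theorem~\ref{partition}\,(2) the interior activity $\iota$ is an activity on $\mathcal{S}_{\mathrm{NBC}}$, so its activity vector has $\alpha$-th entry equal to the number of NBC bases of interior activity $\alpha$. On the other side is the covering system $\mathcal{S}_{\mathrm{tree}}$ whose independent sets are the corresponding colored forests --- forests with $k_2$ free colors for Part~\eqref{activity1Part1}, $(k_1+1,k_2)$-decreasing forests for Part~\eqref{activity1Part2}, and, after the reindexing $k_1\mapsto k_1-1$ in Theorem~\ref{CFM}\,\eqref{CFMPart3}, $(k_1,k_2)$-non-increasing forests for Part~\eqref{activity1Part3}. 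By Proposition~\ref{ActivityP42} the map $\texttt{a}$ sending a tree $T$ to its set of edges of the form $(n,i)$ colored with $1$ is an activity, so the $\alpha$-th entry of its activity vector counts the trees having exactly $\alpha$ such edges.

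Second I would invoke Theorem~\ref{CFM} to obtain, in each case, a bijection $\psi$ between NBC sets and colored forests, recording the essential feature that it is cardinality-preserving: a $j$-edge NBC set is sent to a $j$-edge forest, so $|\psi(X)|=|X|$ and NBC bases (the $(n-1)$-sets) go to spanning trees. Hence $\mathcal{S}_{\mathrm{NBC}}$ and $\mathcal{S}_{\mathrm{tree}}$ are similar $(n-1)$-covering systems, and Theorem~\ref{MainThmSimilar} forces their activity vectors to coincide. Equating the $\alpha$-th entries of the two descriptions from the first step gives the opening sentence of each part. For the ``in particular'' claims I would take $\alpha=0$: the $0$-th entry is $\alpha(0)$, which by Theorem~\ref{TutteActivity}\,\eqref{TutteActivityPart3} equals the number of bounded regions of the arrangement, while on the tree side it counts the forests with no edge of the form $(n,i)$ colored $1$.

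The point I find worth emphasizing, and which makes the argument short, is that we never need $\psi$ to intertwine the two activities: by the ``corollorollary'' every activity on a given covering system produces the same activity vector, so cardinality-preservation alone transports the count. Accordingly, I expect the only real obstacle to be the verification in the second step that the correspondences of Theorem~\ref{CFM} are genuinely cardinality-preserving and send bases to bases, together with the bookkeeping that identifies Part~\eqref{activity1Part1}'s ``forests with $k_2$ free colors'' with a family to which Proposition~\ref{ActivityP42} applies and that reconciles the index conventions of Theorem~\ref{CFM}\,\eqref{CFMPart3} with the present statement.
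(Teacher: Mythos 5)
Your proposal is correct and follows essentially the same route as the paper's proof: invoke Theorem~\ref{CFM} to get the bijection, observe the two sides are similar $(n-1)$-covering systems, apply Proposition~\ref{ActivityP42} for the tree-side activity and Theorem~\ref{MainThmSimilar} to equate activity vectors, then use Theorem~\ref{TutteActivity}\,\eqref{TutteActivityPart3} at $\alpha=0$ for the bounded-region claims. In fact you are slightly more careful than the paper, which asserts similarity directly from ``these two sets are covering systems'' without noting that the bijections of Theorem~\ref{CFM} preserve cardinality, and which silently absorbs the $k_1\mapsto k_1-1$ reindexing between Theorem~\ref{CFM}\,\eqref{CFMPart3} and Part~\eqref{activity1Part3}.
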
 

\begin{proof} We prove Part \eqref{activity1Part1}.  
From Theorem \ref{CFM} Part \eqref{CFMPart1} we have a bijection between the set of NBC sets of  $K^{[-k_2,1+k_2]}_n$ and the set of forests 
with $k_2$ colors. Since these two sets are covering systems, they are similar covering systems. From  Proposition \ref{ActivityP42}   
we know that for the sets of trees with $k_2$ colors, the number of edges of the form $(n,i)$ colored with $1$, is an activity. Therefore,  
Theorem \ref{MainThmSimilar} imply that their activity vectors are equal.

We now prove the bounded regions part.  It follows, just recalling that the number of bounded NBC bases of activity $\textsf{0}$ of a gain  
graph is equal to the number of bounded regions of the corresponding hyperplane arrangements (see for example,   
Theorem \ref{TutteActivity} Part \eqref{TutteActivityPart3}).

Note that the proofs of Parts \eqref{activity1Part2} and \eqref{activity1Part3} are identical ---replace in above argument   
Theorem \ref{CFM} Part \eqref{CFMPart1} by Theorem \ref{CFM} Part \eqref{CFMPart2} and  \eqref{CFMPart3}, respectively.
\end{proof}

\begin{example}[\bf activity numbers of $\mathscr{L}_3$]
In Figure \ref{Trees2} we represent all non-increasing trees with three vertices. From Theorem \ref{CFM} we  
know that they correspond to the NBC bases of the gain graph  $K^{[1,1]}_n$ corresponding to the Linial arrangement.

The activity number of a base is simply the number of children of  
vertex number 3.  We do not color the edges of the trees in Figures \ref{Trees2} and \ref{Trees3} because there is only one color (the color $1$). 
The numbers are, from left to right, \textsf{2}, \textsf{1} and \textsf{0}. It is easy to see that  $\alpha (x)=x^2+x+1$. Note that 
$$
b(\mathscr{L}_3)=(1/8)\left({3 \choose 0}\cdot (-1)^2 +{3 \choose 2}\cdot 1^2+{3 \choose 3}\cdot 2^2\right)=1.
$$
\end{example}

\begin{example} [\bf activity numbers of $\mathscr{L}_4$]
In Figure \ref{Trees3}, we represent all non-increasing  trees with four vertices. By Theorem \ref{CFM} we know that they correspond to the 
NBC bases of  the Linial arrangement. The activity number of a base is simply the number of children of vertex number 4. There are one tree  
of activity number \textsf{3},  three trees of  activity number \textsf{2}, six trees of activity number \textsf{1} and four  tress of activity number 
\textsf{0}.  Therefore,  $\alpha (x)=x^3+3x^2+6x+4$. Finally, we note that there are four trees of activity \textsf{0}. That is, there are four bounded 
regions. We also note, from the equation \eqref{AthanasiadisForm}, that we have 
$$
b(\mathscr{L}_4)=(1/16)\left({4 \choose 0}\cdot (-1)^3 +{4 \choose 2}\cdot 1^3+{4 \choose 3}\cdot 2^3+{4 \choose 4}\cdot 3^3\right)=4.
$$
\end{example}

\begin{figure} [htbp]
\begin{center} 
\includegraphics[scale=.8]{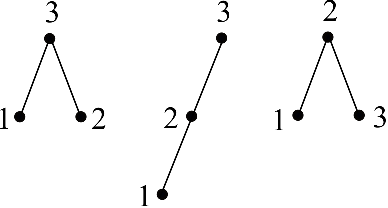} \hspace{2cm}
\end{center}
\caption{The activities are, in order from left to right, \textsf{2}, \textsf{1} and \textsf{0}.} \label{Trees2}
\end{figure}

\begin{figure} [htbp]
\begin{center} 
\includegraphics[scale=.8]{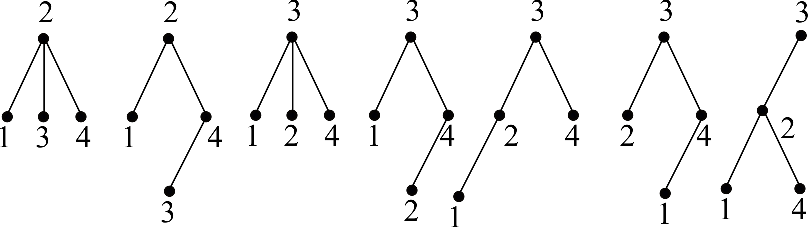}\\[14pt]
\includegraphics[scale=.8]{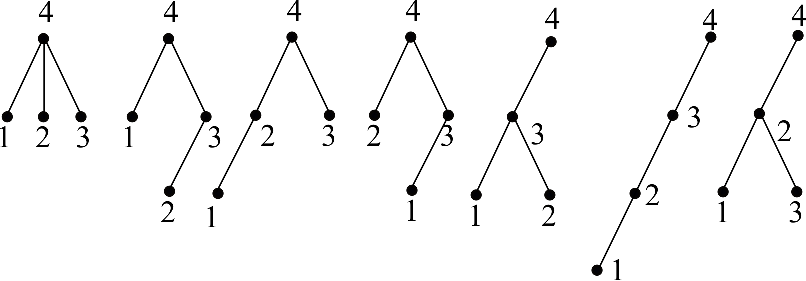}
\end{center}
\caption{ The activities are, from left to right, \textsf{0}, \textsf{1}, \textsf{0}, \textsf{1}, \textsf{0}, \textsf{1}, 
\textsf{0}, \textsf{3}, \textsf{2}, \textsf{2}, \textsf{2}, \textsf{1}, \textsf{1}, \textsf{1}.} \label{Trees3}
\end{figure}

\section{The loosing activity}

We started this research with the aim of finding something in the binary search trees that would correspond to activity in the NBC trees. 
We wanted to use NBC trees of the gain graphs as done by Forge in \cite{Forge}. 
After some experimentation, we figure out that the number of edges of the form $(i+1,i)$ were this something that we where looking for. 
We thought to call it ``activity". But now it is only the  loosing activity, since it does not work as nicely  as the edges of the form $(n,i)$. 
Nevertheless, we manage to prove some interesting results about the number of trees with $k$ edges of the form $(i+1,i)$.  
We conjecture a similar result in the case of LBS trees.  

In the previous section, we gave a general definition for an activity. We showed that the set of edges of the form $(n,i)$ is an activity for 
the general rooted trees (Shi), the decreasing trees (braid), and the non-decreasing trees. We were searching to find out if the number of edges   
of the form $(i+1,i)$ was the correct activity. However, we realized that the correct one was the number of edges of the form $(n,i)$.  
Here we give some results obtained from the original question. So, the aim of this section is to give a count (equivalent to an activity)   
for rooted  labeled trees that fits the activity of the corresponding NBC bases. With this objective in mind, we start with some classical 
arrangements ---the braid arrangement, the Shi arrangement, and the Linial arrangement. We analyze these arrangements with rooted 
trees ---decreasing trees, general trees, and local binary search trees (LBS trees).  

We knew the  activity  numbers for every $n$, so we could try different possible definitions. 
By looking at the first cases, we found  two different candidates for activity for trees on $n$ vertices:
\begin{enumerate}
\item the edges $(n,i)$ in the tree, 
\item the edges $(i+1,i)$ in the tree.
\end{enumerate}
In this section we prove that these two activities give the same numbers for these two cases of increasing trees and of general trees. 

\subsection{The braid case} The NBC set  in $B_n$, the braid arrangement in $n$ dimensions,  corresponds to the set of increasing labeled trees. 
Internally active edges in such a tree are just the edges $(n,i)$. 
We recall that the rising factorial is defined as $x^{(n)}=(x+1)(x+2)\cdots (x+n-1)$. 
The coefficient of $x^k$ in $x^{(n)}$ is an unsigned Stirling number of the first kind denoted by $(-1)^{n-k}s(n,k)$. 

\begin{theorem} \label{NBAinBn}
The number of NBC sets of activity $k$ in $B_n$ is equal to these:
\begin{enumerate}
\item \label{NBAinBnPart0}  the number of increasing trees on $n$ vertices where the vertex $1$ has degree $k$.
\item \label{NBAinBnPart1}  The number of decreasing trees on $n$ vertices where the vertex $n$ has degree $k$.
\item \label{NBAinBnPart2} The number of decreasing trees on $n$ vertices with $k$ edges of the form $(i+1,i)$.
\item \label{NBAinBnPart3} $(-1)^{n-k}s(n,k)$.
\end{enumerate}
\end{theorem}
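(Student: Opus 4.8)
The plan is to read the four quantities as a chain of equalities all anchored to one generating polynomial. The correspondence between NBC bases and trees I would take from the results already proved: specializing Theorem~\ref{CFM}\eqref{CFMPart2} (equivalently Theorem~\ref{activity1}\eqref{activity1Part2}, which rests on Proposition~\ref{ActivityP42} and Theorem~\ref{MainThmSimilar}) to $k_1=k_2=0$ identifies the NBC bases of $B_n=K_n^{[0,0]}$ of activity $k$ with the decreasing labeled trees on $[n]$ having exactly $k$ edges of the form $(n,i)$, that is, with root degree $\deg(n)=k$. This is exactly Part~\eqref{NBAinBnPart1}. To pass to Part~\eqref{NBAinBnPart0} I would use the relabeling involution $\sigma(i)=n+1-i$ on $[n]$: it turns a decreasing tree into an increasing one, sends the extreme vertex $n$ to $1$ and hence preserves the degree of the root, and it carries an edge $(n,i)$ to the edge $(1,\sigma(i))$. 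Thus increasing trees with $\deg(1)=k$ are in bijection with decreasing trees with $\deg(n)=k$, giving \eqref{NBAinBnPart0}$=$\eqref{NBAinBnPart1}.

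The core computation, which yields Part~\eqref{NBAinBnPart3} at once, is to evaluate the common polynomial $P_n(x)=\sum_{T}x^{\deg(1)}$, summed over increasing trees $T$ on $[n]$, by the standard leaf-insertion recursion. Every increasing tree on $[n]$ is obtained uniquely from an increasing tree on $[n-1]$ by attaching the largest label $n$ as a new leaf below one of the $n-1$ existing vertices; attaching it below the root $1$ raises $\deg(1)$ by one, and the remaining $n-2$ choices leave $\deg(1)$ unchanged. Hence $P_n(x)=(x+n-2)\,P_{n-1}(x)$ with $P_2(x)=x$, so
\[
P_n(x)=x(x+1)(x+2)\cdots(x+n-2).
\]
The coefficient of $x^k$ in this product is the signless Stirling number of the first kind recorded in Part~\eqref{NBAinBnPart3}; as a cross-check one obtains the same polynomial from $\alpha(x)=(-1)^{n-1}\chi(1-x)$ of Theorem~\ref{TutteActivity}\eqref{TutteActivityPart1} together with the characteristic polynomial of the braid arrangement. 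This proves \eqref{NBAinBnPart1}$=$\eqref{NBAinBnPart3}.

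For Part~\eqref{NBAinBnPart2} I would run the very same recursion, now tracking the statistic \emph{number of edges $(i+1,i)$}. Transported through $\sigma$, this statistic becomes, on increasing trees, the number of vertices $m\ge 2$ whose parent is $m-1$ (equivalently the edges $(j,j+1)$ with $j$ the parent). Attaching the largest label $n$ as a leaf creates exactly one new edge and alters no existing parent--child pair, so it increments this statistic precisely when $n$ is hung below vertex $n-1$ (one choice) and leaves it unchanged for the other $n-2$ choices. This is again the recursion $(x+n-2)\,P_{n-1}(x)$ with the same initial value, so the $(i+1,i)$-edge statistic is equidistributed with the root degree and Part~\eqref{NBAinBnPart2} follows. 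Equivalently, both statistics are the image of the number of cycles under a bijection from these trees to permutations of $[n-1]$, which is an alternate route I would keep in reserve.

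The step I expect to require the most care is the equidistribution in Part~\eqref{NBAinBnPart2}: one must check that the leaf insertion interacts cleanly with the $(i+1,i)$-edge statistic, namely that exactly one attachment point increments it and that no previously present consecutive edge is created or destroyed. This holds because $n$ enters as a leaf, so all old adjacencies survive untouched, and the single new edge is consecutive exactly when its parent is $n-1$; nevertheless this is the place where a careless bijection could double count, so I would first verify the uniqueness of the decomposition (each tree on $[n]$ comes from a unique tree on $[n-1]$ together with a unique attachment site) before reading off the recursion. Everything else is bookkeeping on the displayed product.
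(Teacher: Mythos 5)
Your proposal is correct, and on the pivotal equivalence \eqref{NBAinBnPart1}$\Leftrightarrow$\eqref{NBAinBnPart2} it takes a genuinely different route from the paper. The paper's proof is a one-line involution directly on decreasing trees: replace every edge of the form $(n,i)$ by $(i+1,i)$ and every edge of the form $(i+1,i)$ by $(n,i)$. This is well defined because a decreasing tree is nothing more than a parent function $p$ on $[n-1]$ with $p(i)>i$, so any reassignment preserving that inequality again yields a decreasing tree, and the swap visibly exchanges the two statistics; the paper also records a second proof, namely that both edge sets define activities on the same covering system of trees, so their activity vectors coincide by the uniqueness of the activity vector (Theorem \ref{MainThmSimilar} and the corollary to Corollary \ref{TomCorollary}). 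You instead prove equidistribution by transporting everything through $\sigma(i)=n+1-i$ to increasing trees and showing both statistics obey the same leaf-insertion recursion $P_n(x)=(x+n-2)P_{n-1}(x)$; your verification that inserting the leaf $n$ disturbs no old parent--child pair and increments the consecutive-edge statistic at exactly one attachment site is precisely the delicate point, and it is right. What each approach buys: the paper's swap is an explicit, instantaneous bijection; your recursion is more computational but self-contained, and it produces the closed product $x(x+1)\cdots(x+n-2)$, hence Part \eqref{NBAinBnPart3}, as a byproduct rather than by citation. Incidentally, your anchoring of Part \eqref{NBAinBnPart1} in Theorem \ref{CFM} Part \eqref{CFMPart2} with $k_1=k_2=0$ is more accurate than the paper's own citation of Part \eqref{CFMPart1} with $k_2=0$, which specializes to the Shi graph $K_n^{[0,1]}$ rather than to $B_n=K_n^{[0,0]}$.

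One caveat on Part \eqref{NBAinBnPart3}: your product shows that the number of NBC bases of activity $k$ is the coefficient of $x^k$ in $x(x+1)\cdots(x+n-2)$, i.e., the signless Stirling number with first index $n-1$, namely $(-1)^{(n-1)-k}s(n-1,k)$, and this is the correct count (for $n=3$ there is exactly one tree each of activity $1$ and $2$, matching $x(x+1)=x^2+x$, whereas $|s(3,1)|=2$ and $|s(3,2)|=3$). So when you assert that this coefficient ``is the Stirling number recorded in Part \eqref{NBAinBnPart3}'', you silently inherit an off-by-one already present in the paper's nonstandard definition of $x^{(n)}=(x+1)\cdots(x+n-1)$; since your recursion actually pins down the exact polynomial, state the index explicitly and flag the discrepancy instead of matching the literal formula.
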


\begin{proof} First of all, we observe that the number of NBC sets of activity $k$ in $B_n$ is equal to the Tutte polynomial given in  Equation 
\eqref{NBAinBnPart0} on Page \pageref{NBAinBnPart0}. 
This follows by Theorem \ref{CFM} Part \eqref{CFMPart1} with $k_2=0$ and Theorem \ref{activity1}.

It is straightforward to see that Parts \eqref{NBAinBnPart0} and  \eqref{NBAinBnPart1} are equivalent. Note that Part \eqref{NBAinBnPart1} is   
equivalent to Part \eqref{NBAinBnPart3} (it follows from Theorems \ref{CFM} Part \eqref{CFMPart1} and from Theorem \ref{activity1}).

We now prove that Part \eqref{NBAinBnPart1} and Part \eqref{NBAinBnPart2} are equivalent.  We just define an involution on the set of decreasing  
 trees on $n$ vertices that send a tree with with $k$ edges of the form $(n,i)$ to a tree with $k$ edges of the form $(i+1,i)$.
 For a tree $T$, we just need to replace every edge in $T$ of the form $(n,i)$ by the edge $(i+1,i)$ and every edge in $T$ of the form $(i+1,i)$ by the edge 
 $(n,i)$. This completes the proof.
 \end{proof}

Note that $n$ is the root of any decreasing tree and that therefore there are no decreasing tree of activity $\textsf{0}$.
This shows that the braid arrangement has 
no bounded regions. We also note that both the sets of edges $(n,i)$ and of edges $(i+1,i)$ define an activity in the system of increasing trees. 
This gives a second proof that Part \eqref{NBAinBnPart1} and Part \eqref{NBAinBnPart2} are equivalent.

\subsection{The Shi case}

The NBC bases of the Shi arrangement correspond to the rooted trees. The aim of Theorem \ref{NumberNBCTrees} is to show that $(n,i)$ or  
the edges $(i+1,i)$ give rise to the same activities as the NBC bases of the Shi arrangement. Part \eqref{NumberNBCTreesPart1} is a  
special case of  Theorems \ref{CFM} and \ref{activity1}. However,  Part \eqref{NumberNBCTreesPart2} is not a special case, because the  
edges $(i+1,i)$ do not define an activity anymore. For the proof of the following theorem see the Appendix.  

\begin{theorem} \label{NumberNBCTrees}
The number of NBC sets of activity $k$ in the Shi arrangement $S_n$ is equal to these:
\begin{enumerate}
\item \label{NumberNBCTreesPart1} the number of rooted trees on $n$ vertices where the vertex $n$ has degree $k$.
\item\label{NumberNBCTreesPart2}  The number of rooted trees on $n$ vertices with $k$ edges of the form $(i+1,i)$.
\item  \label{NumberNBCTreesPart3}  $ {n-1\choose k} (n-1)^{n-1-k}.$
\end{enumerate}
\end{theorem}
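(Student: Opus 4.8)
The plan is to prove the three assertions separately: Part~\eqref{NumberNBCTreesPart1} is a direct specialization of the machinery already established, while Parts~\eqref{NumberNBCTreesPart2} and \eqref{NumberNBCTreesPart3} I would handle together by one inclusion--exclusion computation. For Part~\eqref{NumberNBCTreesPart1}, I would observe that $S_n=K_n^{[0,1]}$ is the case $k_2=0$ of $K_n^{[-k_2,1+k_2]}$. Theorem~\ref{CFM} Part~\eqref{CFMPart1} then gives a bijection between the NBC sets of $S_n$ and the uncolored rooted forests on $[n]$, under which the bases correspond to the rooted trees (the connected, spanning members); and Theorem~\ref{activity1} Part~\eqref{activity1Part1} with $k_2=0$, $\alpha=k$, identifies the NBC bases of activity $k$ with the rooted trees having exactly $k$ edges of the form $(n,i)$, that is, the rooted trees in which $n$ has exactly $k$ children. (Here the relevant count is the out-degree of $n$, which is forced by the definition $\texttt{a}(T)=\{\text{edges }(n,i)\}$ from Proposition~\ref{ActivityP42}; since $n$ need not be the root, a possible edge to the parent of $n$ is not counted.) This settles Part~\eqref{NumberNBCTreesPart1}.

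The engine for the two remaining parts is a size-independent lemma that I would state and prove first: \emph{if $F$ is a rooted forest on $[n]$ with $c$ components, then the number of rooted trees on $[n]$ containing $F$ equals $n^{\,c-1}$}. To prove it I would contract each component to a super-vertex and count the reconnections. A non-root component attaches, through its own root, to any one of the $s_j$ vertices of its parent component, so the number of completions is $\sum_{\tau}\prod_j s_j^{\deg^+_\tau(j)}$, the sum over rooted trees $\tau$ on the $c$ super-vertices of the product of parent-component sizes. The classical identity $\sum_{\tau}\prod_j s_j^{\deg^+_\tau(j)}=(s_1+\cdots+s_c)^{c-1}$ collapses this to $(s_1+\cdots+s_c)^{c-1}=n^{\,c-1}$, independent of the individual sizes $s_j$.

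With the lemma in hand, Parts~\eqref{NumberNBCTreesPart2} and \eqref{NumberNBCTreesPart3} follow from a single computation. For $S\subseteq[n-1]$, forcing the edges $\{(i+1,i):i\in S\}$ (for Part~\eqref{NumberNBCTreesPart2}) or the edges $\{(n,i):i\in S\}$ (for Part~\eqref{NumberNBCTreesPart3}) assigns to each $i\in S$ a single parent strictly larger than $i$, hence defines a valid acyclic rooted forest with $n-|S|$ components; by the lemma the number $f_{\supseteq}(S)$ of rooted trees containing these forced edges is $n^{\,n-|S|-1}$ in both cases. Writing $g(U)$ for the number of rooted trees whose special-edge set is exactly $U$, I would combine $\sum_{S}f_{\supseteq}(S)z^{|S|}=(n+z)^{n-1}$ with the inclusion--exclusion relation $\sum_S f_{\supseteq}(S)z^{|S|}=\sum_U g(U)(1+z)^{|U|}$; the substitution $w=1+z$ gives $\sum_U g(U)w^{|U|}=\big((n-1)+w\big)^{n-1}$, and extracting $[w^k]$ yields $\sum_{|U|=k}g(U)=\binom{n-1}{k}(n-1)^{\,n-1-k}$ for both statistics. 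This is exactly Part~\eqref{NumberNBCTreesPart3}, and, applied to the $(i+1,i)$-edges, Part~\eqref{NumberNBCTreesPart2}.

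The hard part will be Part~\eqref{NumberNBCTreesPart2}. In the braid case (Theorem~\ref{NBAinBn}) the equidistribution of the $(n,i)$- and $(i+1,i)$-statistics came from the involution swapping the two edge types, but that map uses the decreasing structure and need not preserve acyclicity for general rooted trees, so it is unavailable here; likewise the similarity argument of Theorem~\ref{MainThmSimilar} cannot be invoked, because the $(i+1,i)$-edges do not form an activity. The inclusion--exclusion route above avoids both difficulties, and what makes it go through is precisely that the forced edges $(i+1,i)$, exactly like the forced edges $(n,i)$, always constitute a rooted forest with the same number of components, so both statistics are governed by identical generating functions. As a consistency check I would note that summing $\binom{n-1}{k}(n-1)^{\,n-1-k}$ over $k$ gives $n^{n-1}$ (all rooted trees, the value $\alpha(1)$) while weighting by $2^k$ gives $(n+1)^{n-1}$ (the number of regions of $S_n$, the value $\alpha(2)$), in agreement with Theorem~\ref{TutteActivity}.
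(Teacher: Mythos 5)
Your proposal is correct, and for Parts \eqref{NumberNBCTreesPart2} and \eqref{NumberNBCTreesPart3} it takes a genuinely different route from the paper's. Part \eqref{NumberNBCTreesPart1} you handle exactly as the paper does, by specializing Theorems \ref{CFM} and \ref{activity1} to $k_2=0$; your reading of ``degree'' of $n$ as its number of children (the edges $(n,i)$ of the activity in Proposition \ref{ActivityP42}, not counting a possible edge to the parent of $n$) is the intended one. For the remaining parts the paper argues bijectively: a Pr\"uffer-type code identifies rooted trees in which $n$ has $k$ children with words of length $n-1$ over $[n]$ containing $k$ occurrences of the letter $n$, which gives $\binom{n-1}{k}(n-1)^{n-1-k}$, and a second encoding, the ``Blue code'' over the alphabet $\{b,1,\dots,n-1\}$, records each edge $(i+1,i)$ as the special letter $b$, so that trees with $k$ such edges correspond to words with $k$ letters $b$ and the same count falls out. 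You replace both codes by a single enumerative computation: the forest-completion lemma (a rooted forest with $c$ components on $[n]$ extends to exactly $n^{c-1}$ rooted trees, proved by contracting components and invoking the rooted Cayley identity $\sum_\tau\prod_j s_j^{\deg^+_\tau(j)}=(s_1+\cdots+s_c)^{c-1}$), followed by M\"obius inversion over forced edge sets. Your computation checks out: every forced set $\{(i+1,i):i\in S\}$ or $\{(n,i):i\in S\}$, $S\subseteq[n-1]$, is indeed a rooted forest with $n-|S|$ components, and the manipulation from $(n+z)^{n-1}=\sum_U g(U)(1+z)^{|U|}$ to $\sum_{|U|=k}g(U)=\binom{n-1}{k}(n-1)^{n-1-k}$ is correct. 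The trade-off: the paper's codes are explicit bijections, hence reusable machinery (the sort of correspondence one would want in order to attack Conjecture \ref{conjecture1} for LBS trees, where likewise no activity structure is available), while your argument is uniform and more general --- it treats both statistics in one stroke and isolates the real reason for the equidistribution, namely that any acyclic parent rule $i\mapsto p(i)$ on $[n-1]$ makes every forced subset a forest with the same number of components, so every such edge statistic has the distribution $\binom{n-1}{k}(n-1)^{n-1-k}$. Your diagnosis of why the braid-case involution of Theorem \ref{NBAinBn} and the similarity argument of Theorem \ref{MainThmSimilar} both fail here matches the paper's own caveat that the edges $(i+1,i)$ do not define an activity for general rooted trees, and your consistency checks $\alpha(1)=n^{n-1}$ and $\alpha(2)=(n+1)^{n-1}$ agree with Theorem \ref{TutteActivity}.
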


\subsection{ Examples} The examples in this section are based on the algorithms from the Appendix. Consider the tree given in Figure \ref{TreesBlue}.

\begin{example}[\bf Pr\"uffer decodings] We decode $w_p=265256$ with $n=7$. 

 \textbf{Step $i$=1}. Find the first letter in $w_p$ after the last $1$, and then replace it in $w_p$ by $(2,1)$. Set $w_p:=(2,1)65256$. 

   \textbf{Step $i$=2}. Find the first letter in $w_p$ after the last $2$, and then replace it in $w_p$ by $(5,2)$. Set $w_p:=(2,1)652(5,2)6$.

  \textbf{Step $i$=3}. Find the first letter in $w_p$ after the last $3$, and then replace it in $w_p$ by $(6,3)$. Set $w_p:=(2,1)(6,3)52(5,2)6$.

  \textbf{Step $i$=4}. Find the first letter in $w_p$ after the last $4$, and then replace it in $w_p$ by $(5,4)$. Set $w_p:=(2,1)(6,3)(5,4)2(5,2)6$.

  \textbf{Step $i$=5}. Find the first letter in $w_p$ after the last $5$, and then replace it in $w_p$ by $(6,5)$. Set $w_p:=(2,1) (6,3) (5,4) 2 (5,2) (6,5)$.

  \textbf{Step $i$=6}. The root is equal to $6$.

  \textbf{Step $i$=7}. Find the first letter in $w_p$ after the last $6$, and then replace it in $w_p$ by $(2,7)$. Set $w_p:=(2,1) (6,3) (5,4) (2,2) (5,2) (6,5)$.

  \textbf{Output}. The tree has edges $(2,1)$, $(6,3)$, $(5,4)$, $(2,7)$, $(5,2)$, $(6,5)$. See Figure \ref{TreesBlue}.

\end{example}

\begin{example}[\bf Blue coding and decoding]
This coding is given by  $w=b5b24b$ with $n=7$ with output depicted in Figure \ref{TreesBlue}.

The decoding is: 

 \textbf{Step $i$=1}. The parent of $1$ is given by the first letter (left-to-right), in this case it is $b$. Replace $b$ by $(2,1)$, as given in the tree in 
Figure \ref{TreesBlue},  and  set $w=(2,1)5b24b$.

  \textbf{Step $i$=2}.  The parent of $2$ is given by the first letter after the last $2$, in this case it is $4$. Since $4>2$, from the tree in 
Figure \ref{TreesBlue} we can see that $4$ must be replaced by $(5,2)$.
Now  set $w=(2,1)5b2(5,2)b$.

  \textbf{Step $i$=3}.  The parent of $3$ is given by the first regular letter,  in this case it is $5$. Since $5>3$, from the tree in 
Figure \ref{TreesBlue} we can see that $5$ must be replaced by $(6,3)$. 
Now  set  $w=(2,1)(6,3)b2(5,2)b$.

  \textbf{Step $i$=4}.  The parent of $4$ is given by the first regular letter, in this case  it is $b$. Replace $b$ by $(5,4)$ and  set $w=(2,1)(6,3)(5,4)2(5,2)b$.

  \textbf{Step $i$=5}.  The parent of $5$ is given by the first letter after the last $5$, in this case it is $b$. Replace $b$ by $(6,5)$ and set 
$w=(2,1)(6,3)(5,4)2(5,2)(6,5)$.

Note that $6$ is the last letter in the last outcome of $w$, so we conclude that $6$ is the root.

  \textbf{Step $i$=6}.  The parent of the vertex $7$ is the vertex $2$. Then replace $2$ by $(2,7)$ and set $w=(2,1)(6,3)(5,4)(2,7)(5,2)(6,5)$. 

  \textbf{Output}. The tree has edges $(2,1)$, $(6,3)$, $(5,4)$, $(2,7)$, $(5,2)$, $(6,5)$. See, Figure \ref{TreesBlue}.

\begin{figure} [htbp]
\begin{center} 
\includegraphics[scale=.8]{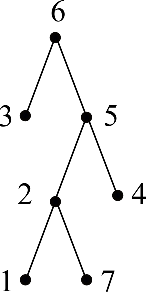} 
\end{center}
\caption{Tree with root 6, and edges $(2,1)$, $(6,3)$, $(5,4)$, $(2,7)$, $(5,2)$, $(6,5)$.} \label{TreesBlue}
\end{figure}

We can check that the coding of this tree is indeed the starting word.
\end{example}

\section{Some conclusions and remarks}

A \emph{local binary search tree} (LBS tree, for short) is a planar rooted  tree for which every vertex $v$ has two possible children, the left one denoted by  
$l$ and the right one denoted by $r$, ordered so that $l<v$ (in case that it exists) and $r>v$ (in case it exists). A left LBS tree is a LBS tree whose root has no 
right child. We believe that the number of left LBS trees with $k$ edges of the form $(i+1,i)$ (recall that the vertices are $[n]=\{1,2,\ldots,n\}$)  
is equal to the number of  left LBS trees with $k$ edges of the   form $(n,i)$.  This is stated formally  in Conjecture \ref{conjecture1}. 

Motivated by the Athanasiadis' question \cite{Athanasiadis}, our main interest from the beginning of this project was the Linial case. The number of regions 
of the Linial  arrangement is known to be equal to the number of LBS trees. 
So, we were expecting to give some kind of a activity  on the edges of the LBS trees, which match with the activity of NBC trees in $K^{[1,1]}_n$. 
We were looking only at the left LBS trees which correspond to NBC trees as all LBS trees correspond to all NBC sets. At some point we thought  
that in left LBS trees the number of edges of the form $(i+1,i)$ should be the desired ``activity". A computer exploration  give some confirmation on 
this belief.  So, we left it as a conjecture (see Conjecture \ref{conjecture1}). 
 
Let us first see that the left LBS trees are actually equivalent to  non-increasing rooted forests. That correspondence is directly described   by a  
\emph{rotation} of the tree.  A \emph{local binary search tree} (LBS tree, for short) is a planar rooted  tree for which every vertex $v$ has two  
possible children, the left one denoted by   $l$ and right one denoted by $r$, such that $l<v$ (in case it exists) and $r>v$ (in case it exists).  
A left LBS tree is an LBS tree whose root has no right child. The correspondence holds by replacing all right edges of an LBS tree as follows; 
take a right edge $(v,r)$ and find the first left ancestor  $x$ of $v$,  and then replace $(v,r)$ by $(x,r)$. 

In Figure \ref{CorrepondenceLBS} we can see that 
$$(2,5)\to (7,5);\;  (5,9)\to (7,9); \; (1,8)\to (9,8);\; (4,6)\to (5,6); \; \text{ and }\; (6,10)\to (5,10).$$   

\begin{figure} [htbp]
\begin{center} 
\includegraphics[scale=.8]{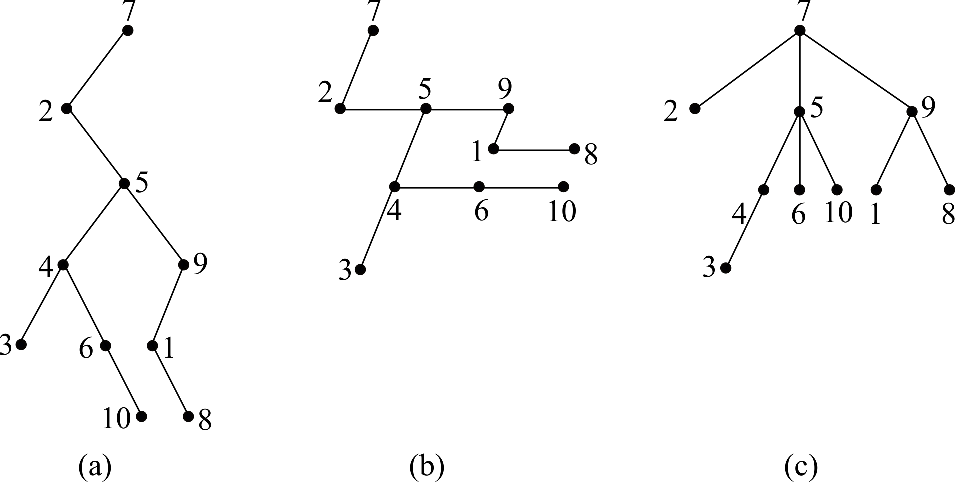} 
\end{center}
\caption{ (a) Left LBS. \hspace{.9cm} (b) Rotation  \hspace{2.5cm} (c) Non-increasing tree  } \quad \label{CorrepondenceLBS}
\end{figure}

\begin{conjecture}\label{conjecture1}
For $0\le k\le n-1$, the number of left LBS trees with $k$ edges of the form $(i+1,i)$ is equal to the number of non-increasing  trees with $k$ edges of the form $(n,i)$.
\end{conjecture}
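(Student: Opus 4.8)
The plan is to prove the equidistribution by transporting both statistics through the rotation of the previous paragraph, which is a bijection between left LBS trees on $[n]$ and non-increasing trees on $[n]$ (a left LBS tree has a root with no right child, so its image is a single tree rather than a genuine forest). Under this rotation a left edge becomes a ``first child'' relation and a right edge becomes a ``next sibling'' relation, so the forest-parent of a vertex is exactly its first left ancestor in the LBS tree. Consequently the edges $(n,i)$ appearing in the conjecture should be read on the associated non-increasing tree — where, by Theorem \ref{CFM} Part \eqref{CFMPart3} and Theorem \ref{activity1} Part \eqref{activity1Part3}, their number is the interior activity of the corresponding $K^{[1,1]}_n$-basis — and these are precisely the vertices on the maximal chain of right edges issuing from the left child of $n$ in the LBS tree. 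Dually, an edge $(i+1,i)$ of the LBS tree (a left edge, since $i<i+1$) becomes the statement that $i$ is the \emph{first} child of $i+1$. Thus the conjecture is equivalent to the following tree-theoretic claim: over non-increasing labeled trees on $[n]$, the statistic ``number of children of $n$'' is equidistributed with the statistic ``number of vertices $v\ge 2$ whose first child is $v-1$.''

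My first approach would be to imitate the involution of Theorem \ref{NBAinBn}, where on decreasing trees the edges $(n,i)$ and the edges $(i+1,i)$ are swapped edge by edge because $n$ and $i+1$ are interchangeable parents for $i$. On non-increasing trees I would try to build a bijection $\Psi$ that converts the children of $n$ into first-child–predecessor pairs: detach the ordered children of $n$ and reinsert them as a chain of consecutive descents $j{+}1\to j$, recording the displaced subtrees so the map inverts. The ordered (planar) structure produced by the rotation should be the bookkeeping device that makes $\Psi$ invertible.

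If a clean bijection resists, the fallback is generating functions via the coding strategy of the Shi case. I would adapt the Blue code of Algorithm \ref{TPart3} to left LBS (equivalently non-increasing) trees so that the number of edges $(i+1,i)$ becomes the number of letters $b$ in the code word, just as $b$ records the edges $(i+1,i)$ in the Shi code; the enumeration by this statistic then reduces to a weighted count of words, which I would match against Athanasiadis's expression \eqref{AthanasiadisForm} and, more generally, against the activity polynomial $\alpha(x)=(-1)^{n-1}\chi_{\mathscr{L}_n}(1-x)$ of Theorem \ref{TutteActivity} Part \eqref{TutteActivityPart1}. Since the activity already realizes $\alpha$ by Theorem \ref{activity1}, it suffices to show that the $(i+1,i)$-enumerator equals $\alpha$.

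The hard part is that, unlike the braid and Shi cases, the two statistics here have genuinely different shapes — a single spine (the children of $n$) versus consecutive descents scattered throughout the tree — so no local edge-swap can work: on an LBS tree $n$ has at most one child, which is exactly why $(n,i)$ had to be replaced by the spine/activity in the first place. Any bijection must therefore be global, and the Linial activity polynomial has no product formula to anchor a closed-form matching (precisely the difficulty Athanasiadis flagged). I expect the crux to be controlling, in the inductive or word-counting step, how inserting the vertex $n$ simultaneously lengthens the spine by one and may create or destroy a consecutive descent $i+1\to i$ elsewhere; reconciling these two competing effects is where the argument will stand or fall.
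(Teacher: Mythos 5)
You should know at the outset that the statement you are proving is Conjecture \ref{conjecture1}: the paper offers no proof of it, only the rotation correspondence with non-increasing trees and the remark that the $(i+1,i)$ statistic translates to ``vertices $v$ whose smallest child is $v-1$.'' Your preliminary translation is faithful to that remark: the rotation does carry left edges to smallest-child relations and right spines to sibling sets, your identification of the $(n,i)$ statistic with the number of children of $n$ in the rotated tree (equivalently the right spine below the left child of $n$ in the LBS tree) is the correct reading, and your observation that $n$ has at most one child in an LBS tree --- so that no local edge-swap in the style of Theorem \ref{NBAinBn} can possibly work --- is exactly the obstruction that makes this case harder than the braid case. But a correct reformulation plus a correct diagnosis of the difficulty is not a proof, and neither of your two routes is carried out.

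Concretely: the involution idea fails for the reason you half-acknowledge. In Theorem \ref{NBAinBn} the swap $n\leftrightarrow i+1$ as parent of $i$ works because the decreasing condition only requires parent larger than child, so both parents are legal and the swap is edge-local. In a non-increasing tree the condition constrains only the \emph{smallest} child of each vertex, so detaching a child of $n$ and reinserting it under $i+1$ changes which child is smallest at both $n$ and $i+1$, and can silently create or destroy validity (and create or destroy other descents $j+1\to j$) elsewhere; your map $\Psi$ with ``recording the displaced subtrees'' is never defined, and no invariant is given that would make it well defined or invertible. The generating-function fallback is equally unexecuted: the Blue code of Algorithm \ref{TPart3} depends on Pr\"uffer-style deletion of the smallest leaf, which stays inside the class of \emph{all} rooted trees (the Shi case) but not inside the class of non-increasing trees --- the paper itself notes that non-increasing forests fail to be a pure independence system precisely because leaf deletion can leave the class --- so the correspondence between code letters $b$ and edges $(i+1,i)$ does not transfer, and no weighted word count is actually matched against $\alpha(x)$ or against \eqref{AthanasiadisForm}. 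You correctly locate the crux (reconciling how inserting $n$ lengthens the spine while perturbing consecutive descents), but you leave it open, which is to say the conjecture remains a conjecture after your argument just as it does in the paper.
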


In Figure \ref{conjecture} we list all 14 left LBS with four vertices. We also give, for each one, the number of edges of the from $(i+1,i)$ and we find these activities: one of activity 3, three of activity 2, 
six of activity 1, and four of activity 0. 

The conjecture could be stated using non-increasing trees instead of LBS trees. The edges $(i+1,i)$ would be only those in which $i$ is the smallest child of $i+1$.  
So, the activity would be the number of vertices $v$ such that their smallest child is $v-1$.

\begin{figure} [htbp]
\begin{center} 
\includegraphics[scale=.8]{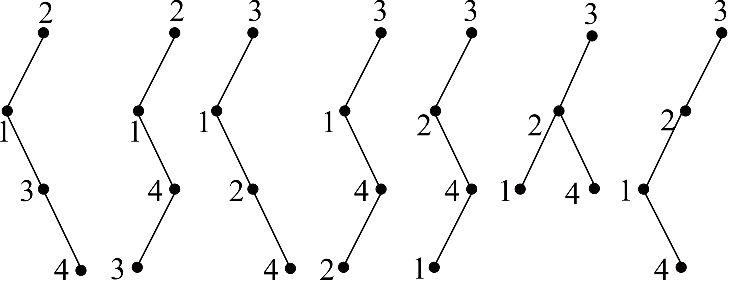}\\[14pt]
\includegraphics[scale=.8]{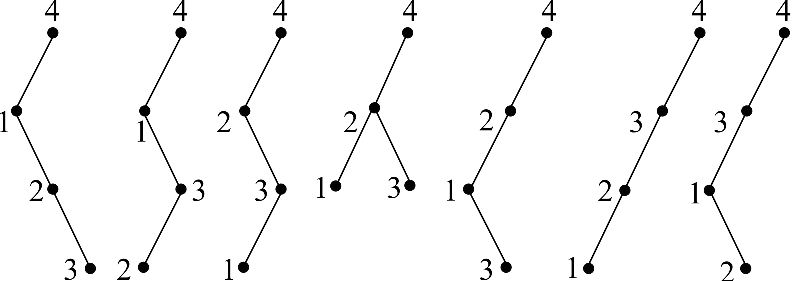}
\end{center}
\caption{ `Loosing' activities. From left to right: \textsf{1}, \textsf{2}, \textsf{0}, \textsf{0}, \textsf{1}, \textsf{2}, 
\textsf{2}, \textsf{0}, \textsf{1}, \textsf{0}, \textsf{1}, \textsf{1}, \textsf{3}, \textsf{1}.}
\label{conjecture}
\end{figure}

\section{Appendix. Algorithms. } \label{Appendix}

\begin{proof}[Proof of Theorem \ref{NumberNBCTrees}]  We prove that Part \eqref{NumberNBCTreesPart1} is equivalent to Part \eqref{NumberNBCTreesPart3}. From Theorems \ref{CFM}  
and \ref{activity1} we obtain a straightforward proof  (because the edges $(n, i)$ define an activity) 
that the number of rooted trees on $n$ vertices where the vertex $1$  has degree $k$ is given by $ {n-1\choose k} (n-1)^{n-1-k}$. 

We give here a second constructive proof of this fact that may lead to a better understanding of this property. 

{\bf The Pr\"uffer code.} The coding algorithm (see Algorithm \ref{TPart1}) takes as an input a rooted labeled tree $T$ on $n$ vertices and gives 
as an output a word $w$ of length $n-1$ on the alphabet $\{1,2, \dots, n\}$.   
In this algorithm  $\lambda$ represents the empty word and $l_i$ represents the smallest leaf of $T$.

\begin{algorithm}[htbp]
\begin{algorithmic}
\Procedure{Pr\"uffer code}{$T$} 
\State $w\gets \lambda$ (the empty word)
    \For{$i = 1 \to n-1$}
        \State $w \gets w l_i$ \Comment{$l_i$= smallest leaf of $T$}
        \State $T \gets T \setminus l_i$  \Comment{delete $l_i$ from $T$}
    \EndFor
 \State \textbf{Return} $w$ 
 \EndProcedure
\end{algorithmic}
\caption{The Pr\"uffer code}
\label{TPart1}
\end{algorithm}

The decoding algorithm (see Algorithm \ref{TPart1deco}) takes as an input a length $n-1$ word on the 
alphabet $\{1,2, \dots, n\}$, and gives as an output a tree. 

Running Algorithm \ref{TPart1deco} we replace a letter $x$ by a pair $(x,i)$. So, the word that contains only letter at the beginning, 
during the process will contain both regular letters and pairs of letters and at the end only pairs. The Algorithm \ref{TPart4}, that is very similar to Algorithm \ref{TPart1deco}, does the same replacement of letters by pairs.

\begin{algorithm}[htbp]
\begin{algorithmic}
\Procedure{Pr\"uffer decoding}{$w$} 
    \For{$i = 1 \to n$} 
   	\If{ $i$ \text{ is the last letter of $w$ } }{  \text{root} $\gets i$}
        		   \Else 
		   \State \text{Search for the last occurrence of $i$ in $w$}
		   \State  \text{$x \gets$ the first regular letter after the last $i$.} 
		   \Comment{(Suppose that the last occurrence of $i$ is in a position $j$. That is, $w_j=i$ and the first regular letter $x$ is  
		   at position $j^{\prime}$, i.e.  $w_j^{\prime}=x$ with $j^{\prime}>j$.) }
		   \State $w_j\gets (x,i)$ 	 
   \EndIf
    \EndFor
    \State $T \gets \text{ root }$ \Comment{$T$ is the tree}
      \While {$w \ne\emptyset$}
			\State Let  $(x,i)$ be the first letter of $\omega$  
   			\State $T\gets T\cup (x,i)$  
			\State $w\gets w\setminus (x,i)$
       \EndWhile
    \State \textbf{Return} $T$ 
 \EndProcedure
\end{algorithmic}
\caption{The Pr\"uffer decoding } \label{TPart1deco}
\end{algorithm}

{\bf Proof that Part \eqref{NumberNBCTreesPart3} is equivalent to Part \eqref{NumberNBCTreesPart2}}. 

{\bf The Blue code.}
The coding algorithm (see Algorithm \ref{TPart3}) takes as an input a rooted labeled tree $T$ on $n$ vertices and gives as an output a word 
$w$ of length $n-1$ on the alphabet $\{b, 1,2, \dots, n-1\}$.  In this algorithm  $\ell_i$ represents the smallest leaf of $T$ and $p_i$ its parent. 

\begin{algorithm}[htbp]
\begin{algorithmic}
\Procedure{Blue coding}{$T$} 
 \State $w \gets \lambda $ (the empty word) 
\While{$T$ has a leaf} \Comment{stops when $T$ has no leaf}
   \For{$i = 1 \to n-1$} 
   	 \If{ $p_i=\ell_i+1$ }{  $w \gets wb$} \Comment{$\ell_i$ is the smallest leaf with parent $p_i$} 
        		\Else 
        			\If{ $p_i>\ell_i$ }{  $w \gets w(p_i-1)$}
        		   		\Else{ $ w \gets wp_i$}
				\State $T \gets T\setminus \ell_i $
    			\EndIf
     	\EndIf
   \EndFor
\EndWhile
 \State \textbf{Return} $w$ 
 \EndProcedure
\end{algorithmic}
\caption{The Blue code for a rooted tree}
\label{TPart3}
\end{algorithm}

The decoding of the word $w$ needs to correct the two transformations of the coding. The procedure is like in our Pr\"uffer decoding to find  
the parents of the vertices from 1 to $n$.
\end{proof}

\begin{algorithm}[htbp]
\begin{algorithmic}
\Procedure{Blue decoding}{$w$} 
\State $\omega \gets w $ 
  \For{$i = 1 \to n$} 
        \If{ $i$ is the last letter of $w$ }{  root $\gets i$}
    		\If{ $i \not \in w$ }{ 0 $\gets j$}
    		\EndIf
        \Else 
                 \State \text{Search the last occurrence of $i$ in $\omega$} 
                 \State  \text{$x \gets$ the first regular letter after $i$}
                 \Comment{(Suppose that last occurrence of $i$ is at position $j$ (so, $\omega_j=i$) and the first regular letter $x$ is at position $j\prime$ 
                 (so, $\omega_{j\prime}=x$) with  $j\prime >j$. )} 
        		\If{ $x=b$ }{ $\omega_{j\prime} \gets ((1+i),i) $ }
        		       \Else 
        				\If{ $x>i$ }{ $w_j \gets  ((x-1),i) $ }
        		   \Else { $\omega_{j\prime} \gets (x,i) $}
    				\EndIf
	\EndIf
     \EndIf
      \EndFor
 \State $T \gets \text{ root }$       
      \While {$\omega \ne\emptyset$}
                        \State Let  $(x,i)$ be the first letter of $\omega$  
   			\State Edges $\gets$ edges $\cup (x,i)$  
   			\State $\omega \gets \omega \setminus (x,i)$  
       \EndWhile
    \State \textbf{Return} $T$  \Comment{defined by roots and edges}
 \EndProcedure
\end{algorithmic}
\caption{The Blue decoding for a word } \label{TPart4}
\end{algorithm}

\textbf{Acknowledgments}.  
The first author was partially supported by The Citadel Foundation.
The second author was partially supported by TEOMATRO project, grant number ANR-10-BLAN 0207. 

We would like to express our special gratitude to Thomas Zaslavsky for his helpful comments and valuable advices.

The authors are grateful to the referees for the helpful suggestions and comments that help improve the paper.

\end{document}